\theoremstyle{plain} 
\newtheorem{theorem}{Theorem}[section]
\newtheorem{proposition}[theorem]{Proposition}
\newtheorem{lemma}[theorem]{Lemma}
\newtheorem{corollary}[theorem]{Corollary}
\theoremstyle{remark}
\newtheorem{remark}[theorem]{Remark}
\newtheorem{example}[theorem]{Example}
\theoremstyle{definition}
\newtheorem{definition}[theorem]{Definition}
\newcommand{\rep}[1]{%
  {%
    \tiny%
    \begin{matrix}%
      #1%
    \end{matrix}%
  }%
}
\def\Hom{\operatorname{Hom}}
\def\coker{\operatorname{coker}}
\def\im{\operatorname{Im}}
\def\Fac{\operatorname{Fac}\,}
\def\add{\operatorname{add}\,}
\def\mod{\operatorname{mod}}
\newcommand{\A}{\mathcal{A}}
\newcommand{\X}{\mathcal{X}}
\newcommand{\Filt}{\operatorname{Filt}}
\newcommand{\I}{\mathcal{I}}
\newcommand{\wH}{\widehat{\mathscr{H}}}
\newcommand{\J}{\mathcal{J}}
\newcommand{\F}{\mathcal{F}}
\renewcommand{\P}{\mathcal{P}}
\newcommand{\T}{\mathcal{T}}
\newcommand{\CT}{\mathfrak{T}}
\newcommand{\Stab}{\operatorname{Stab}}
\title{An algebraic approach to Harder-Narasimhan filtrations}
\author{Hipolito Treffinger}
\begin{document}

\title{An algebraic approach to Harder-Narasimhan filtrations}

\author{Hipolito Treffinger}
\date{}
\maketitle


\abstract{
In this article we study chains of torsion classes in an abelian category $\mathcal{A}$.
We prove that chains of torsion classes satisfying mild technical conditions induce a Harder-Narasimhan filtration for every non-zero object $M$ in $\mathcal{A}$, generalising a well-known property of stability conditions.
We also characterise the slicings of $\mathcal{A}$ in terms of chains of torsion classes. 
We finish the paper by showing that chains of torsion classes induce wall-crossing formulas in the completed Hall algebra of the category.}


\section{Introduction}

The study of stability conditions started with the introduction of Geometric Invariant Theory in \cite{Mumford1965} and quickly became an important tool in the study of moduli spaces in algebraic geometry. 
Inspired by problems of a similar nature arising in other areas of mathematics, many authors adapted the notion of stability conditions to different contexts such as quiver representations \cite{Schofield1991}, abelian categories \cite{King1994, Rudakov1997}, and triangulated categories\;\cite{Bridgeland2007}.

The philosophy behind stability conditions is that every stability condition determines classes of distinguished objects which behave better than the original category, which are known as \textit{semistable objects}.
A key property of stability conditions is that every object in the category admits a filtration by semistable objects that is unique up to isomorphism. 
This filtration is known as the \textit{Harder-Narasimhan} filtration in honour of Harder and Narasimhan, whose work described this phenomena for the first time in \cite{Harder1975}.

In this paper we study the notions of Harder-Narasimhan filtrations from a torsion theoretic point of view, showing the existence of Harder-Narasimhan filtrations that are not induced by stability conditions, see Example~\ref{ex:nowide} and Remark~\ref{rmk:notallchains}.

From now on we fix $\A$ to be an abelian category.
We note that most of our results hold for triangulated categories if the language is changed accordingly. 
A key notion in the study of abelian categories is that of a torsion pair, first introduced in \cite{Dickson1966}.
This concept generalises to any abelian category the fact that every abelian group of finite rank is isomorphic to the direct sum of a torsion and a torsion free group.
A torsion pair is a pair $(\T, \F)$ of full subcategories in $\A$ such that $\Hom_\A (X,Y)=0$ for all $X \in \T$ and $Y \in \F$ and for every object $M$ in $\A$ there exists a short exact sequence
$$0\to tM\to M\to M/tM\to 0$$
where $tM\in\mathcal{T}$ and $M/tM\in\mathcal{F}$.
It follows from this definition that this short exact sequence, known as the \textit{canonical short exact sequence} of $M$ with respect to $(\T, \F)$, is unique up to isomorphism.
If $(\T, \F)$ is a torsion pair we say that $\T$ is a torsion class and that $\F$ is a torsion free class.
We now introduce the main object of study of this paper.

\begin{definition}
Let $\A$ be an abelian category and consider the closed set $[0,1]$ of the real numbers.
A chain of torsion classes $\eta$ is a set of torsion classes in $\A$ indexed by the interval $[0,1]$ as follows.
$$\eta= \{\T_s : s\in[0,1], \T_0=\A, \T_1=\{0\} \text{ and } \T_s \subseteq \T_r \text{ if } r\leq s\}.$$
\end{definition}

Given a chain of torsion classes $\eta$, for each $t\in[0,1]$, we define a subcategory $\P_\eta(t)$ of $\eta$-quasisemistable objects of phase $t$. 
We denote $\P_\eta$ the set $\{\P_\eta(t) \colon t\in [0,1]\}$.
For the full definition of quasisemistable objects please see Definition~\ref{def:quasisemistables}.
Even if these categories can be defined for every chain of torsion classes, one needs to put some mild assumptions on them for $\P_\eta$ to be well-behaved. 
We denote by $\CT(\A)$ the set of all chains of torsion classes satisfying the needed assumptions, see Definition\;\ref{def:CT(A)} for details.
In our first result we show that every chain of torsion classes $\eta \in \CT(\A)$ induces a Harder-Narasimhan filtration on every non-zero object $M$ of\;$\A$. 

\begin{theorem}[Theorem\;\ref{thm:algHNfilt}]
Let $\A$ be an abelian category and $\eta$ be a chain of torsion classes in $\CT(\A)$.
Then every non-zero object $M$ of $\A$ admits a Harder-Narasimhan filtration.
That is a filtration $M_0 \subset M_1 \subset \dots \subset M_n$
such that:
\begin{enumerate}
\item $0 = M_0$ and $M_n=M$;
\item there exists $r_k \in [0,1]$ such that $M_k/M_{k-1} \in \P_\eta(r_k)$ for all $1\leq k \leq n$;
\item $r_1 > r_2 > \dots > r_n$.
\end{enumerate}
Moreover this filtration is unique up to isomorphism.
\end{theorem}

A well-known property of stability conditions is that every stability condition $\phi$ defined over $\A$ induces two chains of torsion classes $\eta_\phi$ and $\overline{\eta}_\phi$ in $\A$, see Lemma~\ref{lem:stab-chain}.
However not every chain of torsion classes in $\A$ is induced by a stability condition, see Remark~\ref{rmk:notallchains}. 
If $\eta_\phi$ is the chain of torsion classes associated to a stability condition $\phi$ we show that the Harder-Narasimhan filtrations induced by $\phi$,  $\eta_\phi$ and $\overline{\eta}_\phi$ coincide. 

\begin{theorem}[Theorem\;\ref{thm:semistabchain}]
Let $\phi:Obj^*(\mathcal{A})\rightarrow [0,1]$ be a stability function, $\eta_\phi$ and $\overline{\eta}_\phi$ be the chains of torsion classes induced by $\phi$ and $t \in [0,1]$.
Then 
$$\P_{\eta_\phi}(t) = \P_{\overline{\eta}_\phi}(t) = \{M \in \A: M \text{ is $\phi$-semistable and } \phi(M)=t\} \cup \{0\}.$$
In particular the Harder-Narasimhan filtrations of $M$ induced by $\phi$,  $\eta_\phi$ and $\overline{\eta}_\phi$ coincide for all non-zero $M \in \A$.
\end{theorem}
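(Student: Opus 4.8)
The plan is to unwind both sides of the claimed equality $\P_t = \{M : M \text{ is } \phi\text{-semistable with } \phi(M)=t\}\cup\{0\}$ using the explicit description of the chain $\eta_\phi$ associated to a stability function. Recall that $\eta_\phi$ is built by setting, for each $s\in[0,1]$, the torsion class $\T_s$ to be the smallest torsion class containing all $\phi$-semistable objects of phase $>s$ (equivalently, $\T_s$ is generated by extensions of objects of phase strictly above $s$), and correspondingly $\F_s$ is the torsion-free class consisting of objects all of whose quotients have phase $\leq s$; this is Proposition \ref{prop:stab-chain} in the excerpt. The first step is therefore to make these descriptions of $\T_s$ and $\F_s$ completely explicit in terms of the Harder-Narasimhan filtration that $\phi$ already provides, since the usual construction shows $M\in\T_s$ iff all HN-factors of $M$ have phase $>s$, and $M\in\F_s$ iff all HN-factors of $M$ have phase $\leq s$.

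Granting that, I would compute $\P_t=\bigcap_{s<t}\T_s\cap\bigcap_{s>t}\F_s$ (and the two boundary cases $t=0,1$ separately) directly. For the inclusion $\supseteq$: if $M$ is $\phi$-semistable with $\phi(M)=t$, then its only HN-factor is $M$ itself with phase $t$, so $M\in\T_s$ for every $s<t$ and $M\in\F_s$ for every $s>t$, hence $M\in\P_t$; and $0$ lies in every torsion and torsion-free class. For the inclusion $\subseteq$: take $0\neq M\in\P_t$. Membership in $\bigcap_{s<t}\T_s$ forces every HN-factor of $M$ to have phase $\geq t$ (take $s\uparrow t$), while membership in $\bigcap_{s>t}\F_s$ forces every HN-factor to have phase $\leq t$ (take $s\downarrow t$); together all HN-factors have phase exactly $t$. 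Since the HN-filtration of $\phi$ has strictly decreasing phases, it can have only one factor, so $M$ is itself $\phi$-semistable of phase $t$. The boundary cases: $\P_0=\bigcap_{s>0}\F_s$ collects objects all of whose HN-factors have phase $\leq s$ for all $s>0$, i.e. phase $0$, so again $M$ is semistable of phase $0$; dually for $\P_1=\bigcap_{s<1}\T_s$.

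Once the equality $\P_t=\{M:\phi\text{-semistable}, \phi(M)=t\}\cup\{0\}$ holds for all $t$, the statement about Harder-Narasimhan filtrations is essentially formal. By Theorem \ref{thm:algHNfilt}, $\eta_\phi\in\CT(\A)$ yields for each non-zero $M$ a unique filtration $0=M_0\subset\cdots\subset M_n=M$ with $M_k/M_{k-1}\in\P_{r_k}$ and $r_1>\cdots>r_n$. Substituting the description of $\P_{r_k}$ just obtained, this is exactly a filtration whose successive quotients are $\phi$-semistable with strictly decreasing phases $r_1>\cdots>r_n$ — precisely the defining property of the $\phi$-Harder-Narasimhan filtration of $M$. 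Uniqueness up to isomorphism holds on both sides, so the two filtrations coincide. The one point requiring care — and the main obstacle — is ensuring that $\eta_\phi$ genuinely lies in $\CT(\A)$ (so that Theorem \ref{thm:algHNfilt} applies) and that the explicit HN-factor description of $\T_s$, $\F_s$ is valid without extra finiteness hypotheses; I would either invoke the relevant part of Proposition \ref{prop:stab-chain} directly or, if needed, record a short lemma verifying the left-continuity/closure conditions in Definition \ref{def:CT(A)} from the axioms satisfied by a stability function.
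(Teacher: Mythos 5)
Your argument reaches the right conclusion, but it takes a genuinely different route from the paper, and the difference matters for the logical architecture of the section. You describe $\T_s$ and $\F_s$ through the Harder--Narasimhan factors of the \emph{classical} $\phi$-HN filtration and then pass to the limit in $s$; this presupposes Rudakov's existence and uniqueness theorem for $\phi$-HN filtrations. The paper deliberately avoids that: it works directly with the characterisations $\T_{\geq t}=\{M:\phi(N)\geq t \text{ for all quotients } N\}$ and $\F_{\leq t}=\{M:\phi(L)\leq t \text{ for all subobjects } L\}$ from Proposition~\ref{prop:stabtorsion}, observes that $\bigcap_{s<t}\T_s=\T_{\geq t}$ and $\bigcap_{s>t}\F_s=\F_{\leq t}$, and then the equivalence with $\phi$-semistability of phase $t$ is immediate from the see-saw property (an object of $\T_{\geq t}\cap\F_{\leq t}$ has $\phi(M)=t$ and all subobjects of phase $\leq t$, hence is semistable; and conversely). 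The payoff of the paper's route is that Corollary~\ref{cor:stabHNfilt} then \emph{derives} the existence of the $\phi$-HN filtration from Theorem~\ref{thm:algHNfilt} together with this theorem; if you prove the present theorem by invoking the $\phi$-HN filtration, that corollary becomes circular. Your approach is fine if you are content to quote Rudakov as known, but it buys nothing here and costs the self-contained rederivation. Your treatment of the ``in particular'' clause (match the two unique filtrations factor by factor) is exactly what the paper does.

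Two smaller points. First, your parenthetical description of $\F_s$ as ``objects all of whose \emph{quotients} have phase $\leq s$'' is wrong as stated: a semistable object of phase $s$ can have quotients of phase strictly greater than $s$, so this condition neither defines the right class nor a torsion-free class in the intended sense; the correct condition is on \emph{subobjects}. You silently correct this when you switch to the HN-factor description, so nothing downstream breaks, but the slip should not survive into a written proof. Second, your closing worry about whether $\eta_\phi\in\CT(\A)$ is needed is well placed but resolves differently than you suggest: the equality of $\P_t$ with the semistables does not require $\eta_\phi\in\CT(\A)$ at all on the paper's route (it never invokes Theorem~\ref{thm:algHNfilt}), and the paper simply adds $\eta_\phi\in\CT(\A)$ as an explicit hypothesis where the HN comparison is made.
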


The main difficulty in defining stability conditions in a non-abelian setting is that given an object $M$, in general there are not enough proper subobjects of $M$ to make the stability conditions interesting. 
For instance, in the case of a triangulated category the proper subobjects of a given object $M$ are its direct summands.
Hence, the naive definition of a stability condition on a triangulated category would imply that every indecomposable object $M$ is stable, for every stability condition.
The breakthrough in this case was done in \cite{Bridgeland2007}, where it was realised that a much richer set of stability conditions could be defined introducing the concept of \textit{slicing}.
In Section~\ref{sec:slicing} we characterise the slicings of abelian categories in terms of chains of torsion classes.

\begin{theorem}[Theorem\;\ref{thm:eqslicingtors}]
Let $\A$ be an abelian category.
Then every chain of torsion classes $\eta$ in $\CT(\A)$ induces a slicing $\P_\eta$. 
Moreover, every slicing $\P$ of $\A$ arises this way.
\end{theorem}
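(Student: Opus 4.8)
The statement is an ``if and only if'', and I would establish the two directions separately. For the direct direction, that $\P_\eta$ is a slicing whenever $\eta\in\CT(\A)$, the Harder--Narasimhan axiom of a slicing is precisely Theorem~\ref{thm:algHNfilt}, so nothing new is needed there; any additivity/closure bookkeeping in the definition of a slicing given in Section~\ref{sec:slicing} is immediate, e.g. each $\P_t$ is closed under extensions because it is an intersection of torsion classes and torsion-free classes, all of which are. The only real point is the orthogonality $\Hom_\A(\P_t,\P_s)=0$ for $t>s$. Given $X\in\P_t$ and $Y\in\P_s$, I would pick any real number $r$ with $s<r<t$; from the defining formula for $\P_t$ one gets $X\in\bigcap_{u<t}\T_u\subseteq\T_r$, and from that for $\P_s$ (the endpoint cases $s=0$ and $t=1$ being covered by the same choice of $r$, which still satisfies $r<1$ and $r>0$) one gets $Y\in\bigcap_{u>s}\F_u\subseteq\F_r$; since $(\T_r,\F_r)$ is a torsion pair, $\Hom_\A(X,Y)=0$.

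For the converse I would start from a slicing $\P=\{\P_t\}_{t\in[0,1]}$ and build a chain $\eta$. For $s\in[0,1]$, let $\T_s$ be the full subcategory of objects all of whose Harder--Narasimhan factors with respect to $\P$ have phase above $s$, and $\F_s$ the one of objects all of whose HN factors have phase at or below $s$ --- with the precise convention at the two endpoints tuned so that $\T_0=\A$ and $\T_1=\{0\}$, in accordance with Definition~\ref{def:CT(A)}. Each pair $(\T_s,\F_s)$ is then a torsion pair: every object of $\T_s$ (resp. $\F_s$) is an iterated extension of objects of the $\P_t$ with $t>s$ (resp. $t\le s$) via its own HN filtration, so $\Hom_\A(\T_s,\F_s)=0$ follows from the orthogonality axiom of $\P$ together with the inheritance of $\Hom$-vanishing under extension closures on both sides; the canonical short exact sequence of an object $M$ is obtained by truncating its HN filtration between the factors of phase above $s$ and those below, the sub lying in $\T_s$ and the quotient in $\F_s$; and the two maximality conditions in the definition of a torsion pair are then formal consequences of these. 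The family $\{\T_s\}$ is visibly nested with the correct endpoints, so $\eta=\{\T_s\}$ is a chain of torsion classes, and one checks it satisfies the regularity hypotheses defining $\CT(\A)$. Finally one verifies $\P_\eta=\P$: an object of $\bigcap_{u<t}\T_u\cap\bigcap_{u>t}\F_u$ has all its HN factors of phase $\ge t$ and all of phase $\le t$, hence exactly one HN factor, of phase $t$, hence lies in $\P_t$ by uniqueness of Harder--Narasimhan filtrations; the reverse inclusion is immediate from the construction of the $\T_u$ and $\F_u$.

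The step I expect to be the main obstacle is neither the orthogonality nor the identity $\P_\eta=\P$, but checking that the chain $\eta$ produced from an arbitrary slicing genuinely lies in $\CT(\A)$: one must verify that the torsion classes $\T_s$ inherit the left/right-continuity-type regularity imposed in Definition~\ref{def:CT(A)}, and reconcile the behaviour at the boundary phases $t=0$ and $t=1$ with the asymmetric definitions of $\P_0$ and $\P_1$. This is also the place where the existence and uniqueness of the Harder--Narasimhan filtrations coming from the slicing --- in particular the facts that quotients do not decrease the minimal phase and subobjects do not increase the maximal phase --- must be invoked most carefully.
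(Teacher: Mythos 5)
Your proposal is correct and follows essentially the same route as the paper: the forward direction rests on Theorem~\ref{thm:algHNfilt} for the filtration axiom (you prove the orthogonality $\Hom_\A(\P_t,\P_s)=0$ directly by inserting a torsion pair $(\T_r,\F_r)$ with $s<r<t$, whereas the paper cites Theorem~\ref{thm:quasistables}.4, but the content is the same), and your converse construction $\T_s=\{M:\text{all HN factors of }M\text{ have phase above }s\}$ with canonical sequences obtained by truncating HN filtrations is exactly the paper's Lemma~\ref{lem:torsasfilt}, where $\T_s=\mbox{Filt}\bigl(\bigcup_{t\geq s}\P_t\bigr)$. The step you flag as the main obstacle, membership of the resulting chain in $\CT(\A)$, is handled in the paper only implicitly as well; it follows because for a fixed object $M$ the torsion subobjects $t_sM$ range over the finitely many truncations of its HN filtration, so both chain conditions of Definition~\ref{def:CT(A)} hold.
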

It is worth noting that in \cite{TattarTreffinger} it has been shown that every slicing is induced by a weak stability condition, a generalisation of stability conditions introduced in \cite{Joyce2007}.

A very important class of chain of torsion classes is the class of \textit{maximal green sequences}, see Section~\ref{sec:MGS}. 
Although they can be defined in arbitrary abelian categories, they are particularly prominent in when $\A = \mod A$, the category of finitely generated modules over a finite-dimensional algebra $A$.
Combining the results of this paper together with those of \cite{SchrollTreffinger, Treffinger2019} we obtain the following. 
We note that an equivalent result was proven independently by Demonet in \cite{DK2019}.

\begin{theorem}[Theorem~\ref{thm:MGSforAlg}]
Let $A$ be a finite-dimensional algebra. 
Then every maximal green sequence of length $l$ in $\mod A$ is determined by a set $\{B_1, \dots, B_l\}$ of bricks in $\mod A$ satisfying that $\Hom_A(B_i, B_j)=0$ if $i > j$.
\end{theorem}

An important application of stability conditions was found in \cite{Reineke2003}, where it was shown that any stability condition over an abelian category $\A$ induces a factorisation of a distinguished element $e_\A$ in the completed Hall algebra $\wH(\A)$ of $\A$, giving rise to examples of wall-crossing formulas.
This result can be generalised as a consequence of Theorem~1.2. 
Indeed, we show that every chain of torsion classes $\eta \in \CT(\A)$ gives rise to a distinguished factorisation of $e_\A$, see Theorem~\ref{thm:wallcrossing}.

\bigskip
The structure of the paper is the following. 
In Section\;\ref{sec:HNfilt} we define chains of torsion classes and we study some of their basic properties, including the existence of Harder-Narasimhan filtrations for every non-zero object in the category.
In Section\;\ref{sec:ordering} we show that every chain of torsion classes induces a preorder on the abelian category and we study the behaviour of the quasisemistable objects under this preorder.
Section\;\ref{sec:stab} is dedicated to the comparison between chains of torsion classes and stability conditions.
In Section\;\ref{sec:slicing} we characterise slicings in terms of chain of torsion classes. 
In Section\;\ref{sec:MGS} we study some consequences of our results in the module categories of finite-dimensional algebras.
Finally, in Section\;\ref{sec:wallcrossing} we consider an application of chains of torsion classes on the study of Hall algebras.


\section{Filtrations induced by chains of torsion classes}\label{sec:HNfilt}

In this section introduce the set of chains of torsion classes $\CT(\A)$ of $\A$ and we prove that every chain of torsion classes in an abelian category $\A$ satisfying some mild assumptions induce a Harder-Narasimhan filtration on every non-zero object $M$ of $\A$. 

A torsion pair is a pair $(\T, \F)$ of full subcategories in $\A$ such that $\Hom_\A (X,Y)=0$ for all $X \in \T$ and $Y \in \F$ and for every object $M$ in $\A$ there exists the a short exact sequence
$$0\to tM\to M\to M/tM\to 0$$
where $tM\in\mathcal{T}$ and $M/tM\in\mathcal{F}$.
It follows from this definition that this short exact sequence, known as the \textit{canonical short exact sequence} of $M$ with respect to $(\T, \F)$, is unique up to isomorphism.A schematic picture of a torsion pair is depicted in Figure~\ref{fig:torsionpair}.

\begin{figure}
\begin{center}
    \begin{tikzpicture}
    \begin{scope}
    \clip (0,0) ellipse (4 and 2);
    \draw[fill=gray!20] (5,0) ellipse (4 and 3);
    \draw[fill=gray!30] (-5,0) ellipse (4 and 3);
    \draw[thick] (0,0) ellipse (4cm and 2cm);
    \end{scope}
    \node[] at (-2.3, 0)  {$\F$};
    \node[] at (2.3, 0)  {$\T$};
    \node[anchor=north west] at (3, 2)  {$\A$};
\end{tikzpicture}
\end{center}
    \caption{A torsion pair $(\T, \F)$ in $\A$.}
    \label{fig:torsionpair}
\end{figure}

We recall the definition of a chain of torsion classes given in the introduction. 

\begin{definition}\label{def:indexchaintorsion}
Let $\A$ be an abelian category and consider the closed set $[0,1]$ of the real numbers.
A chain of torsion classes $\eta$ is a set of torsion classes indexed by the interval $[0,1]$ such that $\T_0 = \A$, $\T_1 = \{0\}$ and $\T_s \subseteq \T_r$ if $r \leq s$.
$$\eta:= \{\T_s : s\in [0,1] \text{ and } \T_0=\A, \T_1=\{0\} \text{ and } \T_s \subseteq \T_r \text{ if } r\leq s\}.$$
\end{definition}

\begin{remark}
It follows directly from the definition that given a chain of torsion classes $\eta$ and two distinct elements $s,t \in [0,1]$ we might have $\T_s = \T_r$, as it can be seen in Examples~\ref{ex:torsionpair} and \ref{ex:nowide}.
\end{remark}

\begin{figure}
\begin{center}
    \begin{tikzpicture}
    \begin{scope}
    \clip (0,0) ellipse (4 and 2);
    \draw[fill=gray!10] (1.5,0) ellipse (4 and 3);
    \draw[fill=gray!20] (2.5,0) ellipse (4 and 3);
    \draw[fill=gray!30] (5,0) ellipse (4 and 3);
    \draw[fill=gray!40] (6,0) ellipse (4 and 3);
    \draw[ultra thick] (0,0) ellipse (4cm and 2cm);
    \end{scope}
    \node[] at (0.6, 0)  {$\dots$};
    \node[] at (3.3, 0)  {$\dots$};
    \node[anchor=west] at (-4, 0)  {$\T_0=\A$};
    \node[anchor=west] at (-2.5, 0)  {$\T_{a_1}$};
    \node[anchor=west] at (-1.5, 0)  {$\T_{a_2}=\T_{a_3}$};
    \node[anchor=west] at (1, 0)  {$\T_{a_4}$};
    \node[anchor=west] at (2, 0)  {$\T_{a_5}$};
    \filldraw[black] (4,0) circle (2pt);
    \node[anchor=west] at (4, 0)  {$\T_1=\{0\}$};
    \node[anchor=north] at (-4, -3) (0) {$0$};
    \node[anchor=north] at (4, -3) (1) {$1$};
    \draw[thick] (-4,-3) -- (4,-3);
    \foreach \y in {1,2,3,4,5}
    \draw[thick] (40*\y-4 cm, -3.1) -- (40*\y-4 cm, -2.9) node[anchor=north, below] {$a_\y$};
    \draw[thick] (-4 cm, -3.1) -- (-4 cm, -2.9);
    \draw[thick] (4 cm, -3.1) -- (4 cm, -2.9);
\end{tikzpicture}
\end{center}
    \caption{A chain of torsion classes $\eta$ in $\A$.}
    \label{fig:chaintorsionclasses}
\end{figure}

Also, every chain of torsion classes $\eta$ determines uniquely a chain of torsion free classes as follows. 
$$\{\F_s : s\in [0,1] \text{ and }(\T_s, \F_s) \text{ is a torsion pair for all } \T_s \in [0,1]\},$$
where it follows from the definitions that $\F_r \subseteq \F_s \text{ if } r\leq s$.
From now on, given a chain of torsion classes $\eta$ and  $s \in [0,1]$ we denote by $\F_s$ to be the torsion free class such that $(\T_s, \F_s)$ is a torsion pair in $\A$.

\smallskip
One of the main characteristics of torsion pairs is the existence of a canonical short exact sequence associated to any object in the category. 
It is easy to see that if two torsion classes $\T$ and $\T'$ are such that $\T \subseteq \T'$ then for every object $M$ of $\A$ we have that $tM$ is a subobject of $t'M$ and, moreover, that the torsion object $tt'M$ of $t'M$ with respect to $\T$ is isomorphic to $tM$.
In particular, if we take a chain of torsion classes $\eta$ and an interval $(a,b) \subset [0,1]$, we have that for every non-zero object $M$ of $\A$, we have a chain of subobjects 
\begin{equation}\label{eq:stabilisation}
    0 \subseteq \dots \subseteq t_{r_2} M \subseteq \dots \subseteq t_{r_1} M \subseteq \dots \subseteq M
\end{equation}
where $a < r_1 < r_2 < b$.
The existence of a Harder-Narasimhan filtration for $M$ relies on the stabilisation in both directions of chains of the form (\ref{eq:stabilisation}) for every object $M$ and every interval $(a,b)$.
We follow ideas of \cite{Rudakov1997} (\textit{cf.} Definition~\ref{def:StabA}) to introduce a necessary mild technical constraint on the chains that we consider in this paper.

\begin{definition}\label{def:CT(A)}
A chain of torsion classes $\eta$ in $\A$ is said to be \textit{quasi-noetherian} if for every subset $S\subset[0,1]$ and every object non-zero $M$ of $\A$ there exists $s(M)\in S$ such that the torsion object $t_{s(M)} M$ of $M$ with respect to $\T_{s(M)}$ contain the torsion object $t_r M$ of $M$ with respect to $\T_r$ for all $r \in S$.

Dually, a chain of torsion classes $\eta$ in $\A$ is said to be \textit{weakly-artinian} if for every subset $S\subset[0,1]$ and every non-zero object $M$ of $\A$ there exists $s'(M)\in S$ such that the torsion object $t_{s'(M)} M$ is in $t_r M$ for all $r \in S$.

We denote by $\CT(\A)$ the set of all chains of torsion classes in $\A$ that are quasi-noetherian and weakly-artinian. 
\end{definition}

\begin{remark}\label{rmk:etainCT(A)}
Suppose that $\eta$ is a chain of torsion classes for which there is a finite family of torsion classes $\{\X_1, \X_2, \dots, \X_n\}$ satisfying the following: for every $r \in [0,1]$ there is a $\X_i \in \{\X_1, \X_2, \dots, \X_n\}$ such that $\T_r = \X_i$.
Then $\eta$ belongs to $\CT(\A)$.

Also, if $\A$ is an artinian and noetherian category, then every chain of torsion classes $\eta$ is an element of $\CT(\A)$.
An important example of such categories is the category of finitely generated modules over a finite-dimensional algebra over a field.
\end{remark}

Below we include two explicit examples of a chain of torsion classes illustrating Definition~\ref{def:CT(A)}.

\begin{example}\label{ex:torsionpair}
Let $\A$ be an abelian category and consider a non-trivial torsion pair $(\T,\F)$ in $\A$. 
Then for every pair of real numbers $a,b \in [0,1]$ such that $a < b$ we can construct the chain of torsion classes $\eta_{(a,b)}$, where $\T_s \in \eta_{(a,b)}$ is defined as follows.
$$
\T_s= \left\{\begin{array}{ll}
\A & \text{ if $t \in [0,a)$}\\
\T & \text{ if $t \in [a,b)$}\\
\{0\} & \text{ if $t\in [b, 1]$}
\end{array}
\right.
$$
Then clearly $\eta_{(a,b)} \in\CT(\A)$. 
\end{example}

\begin{example}\label{ex:nowide}
Consider the path algebra $kQ$ of the following quiver.
$$\xymatrix{
 Q: \quad 1 \ar[r] & 2 \ar[r] & 3
}$$
It is well known that $A=kQ$ is a hereditary algebra and that $\mod A$ is an abelian length category having six non-isomorphic indecomposable modules that can be ordered in the so-called Auslander-Reiten quiver of $A$ as follows.

\begin{center}
	\begin{tikzcd}
		&	& \rep{1\\2\\3}\ar[dr]	&	&	\\
    	&\rep{2\\3}\ar[ur]\ar[dr]\arrow[rr, dotted, no head]	&	&\rep{1\\2}\ar[dr]	&	\\
    	\rep{3}\ar[ur]\arrow[rr, dotted, no head]&	&\rep{2}\ar[ur]\arrow[rr, dotted, no head]	&	&\rep{1}	
	\end{tikzcd}
\end{center}

We can build the chain of torsion classes $\eta \in \CT(\mod A)$ where $\T_t \in \eta$ is as follows.
$$
\T_t= \left\{\begin{array}{ll}
\mod A & \text{ if $t=0$}\\
\add\left\{\rep{1} \oplus \rep{2} \oplus \rep{2\\3} \oplus \rep{1\\2} \oplus \rep{1\\2\\3}\right\} & \text{ if $t \in (0,1/3]$}\\
\add\{\rep{1}\} & \text{ if $ t \in (1/3, 2/3]$}\\
\{0\} & \text{ if $t\in (2/3,1]$.}
\end{array}
\right.
$$

\end{example}

\begin{example}\label{ex:continousP}
Take the path algebra $\mathbb{C}Q$ over the complex numbers of the Kronecker quiver 
$$Q:\begin{tikzcd}1 \arrow[r, "\alpha", shift left = 1]\arrow[r, "\beta"', shift left = -1]
& 2\end{tikzcd}$$
and consider the category $\mod \mathbb{C}Q$ of right $\mathbb{C}Q$-modules.
Inside this category there is an infinite family of indecomposable modules $M(\lambda, n)$ parameterized by $\lambda \in \mathbb{P}^1(\mathbb{C})$ and $n \in \mathbb{Z}_{>0}$.
For every subset $S$ of $\mathbb{P}^1(\mathbb{C})$ there is a unique torsion class $\T_S$ which is the minimal torsion class containing all the modules $M(\lambda,n)$ with $\lambda\in S$.
Moreover, given $S$ and $R$ two subsets of $\mathbb{P}^1(\mathbb{C})$ we have that $\T_S \subseteq \T_R$ if and only if $S \subseteq R$.

Now, any bijection $f: \mathbb{P}^1(\mathbb{C}) \to \mathbb{S}^2$ between $\mathbb{P}^1(\mathbb{C})$ and the $2$-sphere 
$$\mathbb{S}^2= \{ (x,y,z) \in \mathbb{R}^3 : x^2 + y^2 + z^2 = 1\}$$
sending a $\lambda \in \mathbb{P}^1(\mathbb{C})$ to $f(\lambda)=(f_x(\lambda), f_y(\lambda), f_z(\lambda))$
induces a chain of torsion classes $\eta_f \in \CT(\mathbb{C}Q)$, where for every $r \in (0,1)$ we define $\T_r \in \eta_f$ to be $\T_r = \T_{S_r}$, where $S_r = \{ \lambda\in\mathbb{P}^1(\mathbb{C}):
f_z(\lambda) \geq r\}$.
Since $\eta_f$ is a chain in $\mod \mathbb{C}Q$, the remark above implies that $\eta_f \in \CT(\mod \mathbb{C}Q)$. 
\end{example}

\begin{proposition}\label{prop:limsup}
Let $\eta$ be a chain of torsion classes in $\CT(\A)$ and let $I=(r_1, r_2)$ be an interval in $[0,1]$.
Then $\bigcap\limits_{r\in I} \T_r$ and $\bigcup\limits_{r\in I} \T_r$
are torsion classes in $\A$, while $\bigcap\limits_{r\in I} \F_r$ and $\bigcup\limits_{r\in I} \F_r$ are torsion free classes in $\A$.
\end{proposition}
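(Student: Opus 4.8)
The plan is to realise each of the four subcategories as the torsion, respectively torsion-free, half of an explicit torsion pair. Concretely, I would prove that
$$\Big(\bigcup_{r\in(r_1,r_2)}\T_r,\ \bigcap_{r\in(r_1,r_2)}\F_r\Big)\quad\text{and}\quad\Big(\bigcap_{r\in(r_1,r_2)}\T_r,\ \bigcup_{r\in(r_1,r_2)}\F_r\Big)$$
are torsion pairs in $\A$, from which the four assertions are immediate. (If $r_1\geq r_2$ the interval is empty and the statement reduces to the trivial facts about $(\A,\0)$ and $(\0,\A)$, so assume $r_1<r_2$.) Since the definition of torsion pair in use has four clauses, I would first record the standard reduction that clauses (2) and (3) follow from clause (1) together with the existence of a canonical short exact sequence: given the latter, if $\Hom_\A(X,Y)=0$ for all $Y$ in the torsion-free half then the canonical surjection of $X$ onto its torsion-free quotient is zero, so $X$ coincides with its torsion part; dually, if $\Hom_\A(X,Y)=0$ for all $X$ in the torsion half then the inclusion of the torsion part of $Y$ into $Y$ is zero, so $Y$ coincides with its torsion-free part. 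Hence it suffices to check clause (1) and to construct the canonical sequences.

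Clause (1) is immediate for both pairs: an object of $\bigcup_r\T_r$ lies in a single $\T_r$ and an object of $\bigcap_r\F_r$ lies in that same $\F_r$ with $(\T_r,\F_r)$ a torsion pair, and symmetrically for the second pair. The content is the construction of canonical sequences, which is where $\eta\in\CT(\A)$ is used. For the first pair and a non-zero $M$, apply the quasi-Noetherian condition to $S=(r_1,r_2)$ to get $s\in(r_1,r_2)$ with $t_rM\subseteq t_sM$ for all $r\in(r_1,r_2)$, and set $N:=t_sM$, so $N\in\T_s\subseteq\bigcup_r\T_r$. I would then show $M/N\in\F_r$ for every $r\in(r_1,r_2)$, giving the desired sequence $0\to N\to M\to M/N\to 0$: if $r\geq s$ then $\T_r\subseteq\T_s$, hence $\F_s\subseteq\F_r$ and $M/N=M/t_sM\in\F_s\subseteq\F_r$; if $r<s$ then $\T_s\subseteq\T_r$ forces $t_sM\subseteq t_rM$ by the monotonicity of torsion subobjects recorded before the statement, while maximality of $N$ gives $t_rM\subseteq t_sM$, so $t_rM=N$ and $M/N=M/t_rM\in\F_r$.

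The second pair is dual, using the weakly-Artinian condition: choose $s\in(r_1,r_2)$ with $t_sM\subseteq t_rM$ for all $r\in(r_1,r_2)$ and set $N':=t_sM$. One checks $N'\in\T_r$ for every $r$ — for $r\leq s$ because $N'\in\T_s\subseteq\T_r$, for $r>s$ because $\T_r\subseteq\T_s$ together with minimality of $N'$ forces $t_rM=N'$ — and $M/N'\in\F_r$ for at least one $r$, e.g.\ any $r\in(s,r_2)$ (non-empty since the interval is open), again because $t_rM=N'$ there. This produces the canonical sequence $0\to N'\to M\to M/N'\to 0$ for the second pair.

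The one genuinely delicate point, which I expect to be the main obstacle, is showing that the torsion-free quotient $M/N$ of the stabilised torsion subobject lands in \emph{every} $\F_r$: torsion-free classes are closed under subobjects but not under quotients, so from $M/t_rM\in\F_r$ one cannot directly deduce $M/N\in\F_r$ when $t_rM\subsetneq N$. The fix is to show that for exactly those indices $r$ where the naive argument fails the two torsion subobjects coincide, by squeezing the maximality (respectively minimality) supplied by the $\CT(\A)$-hypothesis against the monotonicity $\T\subseteq\T'\Rightarrow tM\subseteq t'M$ combined with the chain relation $\T_s\subseteq\T_r$ for $r\leq s$. Once this squeeze is in place, the remaining steps — the Hom-orthogonality of clause (1) and the reduction of clauses (2) and (3) — are routine.
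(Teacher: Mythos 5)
Your proof is correct, but it takes a different route from the paper's. The paper proves the proposition head-on: it verifies for $\bigcup_{r}\T_r$ that the class is closed under quotients and under extensions (using the nesting $\T_{s'}\subseteq\T_s$ to place both ends of an extension in a single $\T_s$), and then uses the quasi-Noetherian hypothesis only to produce a maximal subobject $t_sM$ of each $M$ lying in the union; the other three cases are declared ``similar.'' You instead exhibit each of the four classes as one half of an explicit torsion pair, namely $\bigl(\bigcup_r\T_r,\bigcap_r\F_r\bigr)$ and $\bigl(\bigcap_r\T_r,\bigcup_r\F_r\bigr)$ — which is precisely the content of the paper's \emph{next} proposition — after first recording the standard reduction of clauses (2) and (3) of the definition to Hom-orthogonality plus the existence of canonical sequences. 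Your squeeze argument (maximality or minimality of $t_sM$ from the $\CT(\A)$-hypothesis against the monotonicity $t_rM\subseteq t_sM$ forced by $\T_s\subseteq\T_r$) correctly shows $t_rM=t_sM$ for the problematic indices, and in fact makes explicit a point the paper leaves implicit: that the quotient $M/t_sM$ really lands in the complementary torsion-free class. What the paper's approach buys is a direct check of the closure properties without needing to identify the complementary class; what yours buys is that Proposition~\ref{prop:limsup} and the subsequent identification of the torsion pairs come out in one stroke, at the cost of invoking (and justifying) the reduction of the definition's two ``maximality'' clauses. The only cosmetic inefficiency is in your second pair, where $M/N'\in\F_s\subseteq\bigcup_r\F_r$ is immediate and the detour through $r\in(s,r_2)$ is unnecessary.
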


\begin{proof}
We only prove that $\bigcup\limits_{r\in I} \T_r$ is a torsion class, the other cases being similar.
Let $M\in \bigcup\limits_{r\in I} \T_r$ and $N$ be a quotient of $M$. 
Then $M\in \T_r$ for some $r\in I$.
Hence $N\in \T_r$ because $\T_r$ is closed under quotients. 
Therefore $N\in \bigcup\limits_{r\in I} \T_r$. 
In other words, $\bigcup\limits_{r\in I} \T_r$ is closed under quotients.

Let $M, M'\in \bigcup\limits_{r\in I} \T_r$. 
Then $M\in \T_s$ and $M'\in \T_{s'}$ for some $s, s' \in I$.
Without loss of generality we can suppose that $s \leq s'$.
Then $\T_{s'} \subseteq \T_s$ by Definition\;\ref{def:indexchaintorsion}.
In particular $M' \in \T_s$.
Then for every short exact sequence 
$$0\to M' \to E \to M \to 0$$
we have that $E$ belong to $\T_s$ because $\T_s$ is closed under extensions. 
Then $\bigcup\limits_{r\in I} \T_r$ is closed under quotients and extensions.

To finish the proof we need to show that every object $M \in \A$ has a torsion subobject $tM \in \bigcup\limits_{r\in I} \T_r$.
By hypothesis, $\eta \in \CT(\A)$.
In particular, $\eta$ is a quasi-noetherian, that is, there exists $s(M) \in I$ such that $t_r M$ is a subobject of $t_{s(M)} M$ for all $r \in I$.
We claim that $t_{s(M)} M$ is the torsion object of $M$ with respect to $\bigcup\limits_{r\in I} \T_r$.
Indeed, if $L$ is a subobject of $M$ such that $L \in\bigcup\limits_{r\in I} \T_r$, there exists a $r \in I$ such that $L \in \T_r$. 
Now, $L$ is a subobject of the torsion object $t_r M$ of $M$ with respect to $\T_r$ which, in turn, is a subobject of $t_s M$.
Hence $L$ is a subobject of $t_{s(M)} M$.
\end{proof}

\begin{example}
Consider $\eta_f$ as in Example~\ref{ex:continousP} and take an interval $I=(r_1, r_2)$ inside $[0,1]$. 
Then $T_r$ is strictly included in $T_s$ if $s < r$. 
A straightforward verification shows that $\bigcap\limits_{r\in I} \T_r =  \T_{r_2}$ where $r_2 \not \in (r_1, r_2)$. 
\end{example}

\begin{proposition}
Let $\eta \in \CT(\A)$ be a chain of torsion classes. 
Then for every $t\in [0,1]$ we have that 
$$\left( \bigcup_{r>t} \T_r ,  \bigcap_{s>t} \F_s
\right)$$
and 
$$\left( \bigcap_{r<t} \T_r ,  \bigcup_{s<t} \F_s
\right)$$
are torsion pairs in $\A$.
\end{proposition}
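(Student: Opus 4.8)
The plan is to reduce everything to the standard fact, recalled in the remark following Definition~\ref{def:indexchaintorsion}, that a torsion class $\mathcal{U}$ in $\A$ has a \emph{unique} torsion-free class partner, and that this partner is precisely $\mathcal{U}^{\perp} := \{Y \in \A : \Hom_\A(X,Y) = 0 \text{ for all } X \in \mathcal{U}\}$; dually, a torsion-free class $\mathcal{G}$ has a unique partner torsion class, namely ${}^{\perp}\mathcal{G} := \{X \in \A : \Hom_\A(X,Y) = 0 \text{ for all } Y \in \mathcal{G}\}$. So it suffices, for the first pair, to show that $\mathcal{U} := \bigcup_{r>t}\T_r$ is a torsion class and that $\mathcal{U}^{\perp} = \bigcap_{s>t}\F_s$; and, for the second pair, that $\mathcal{G} := \bigcup_{s<t}\F_s$ is a torsion-free class and that ${}^{\perp}\mathcal{G} = \bigcap_{r<t}\T_r$. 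Identifying the partner as the orthogonal subcategory automatically yields all of the torsion pair axioms, including the existence of the canonical short exact sequence.

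That the four subcategories in question are torsion, resp.\ torsion-free, classes is already essentially contained in Proposition~\ref{prop:limsup}: when $0 < t < 1$ one has $\bigcup_{r>t}\T_r = \bigcup_{r\in(t,1)}\T_r$ (since $\T_1 = \{0\}$ is contained in every torsion class) and $\bigcap_{r<t}\T_r = \bigcap_{r\in(0,t)}\T_r$ (since $\T_0 = \A$), and likewise for the torsion-free classes, so the argument of Proposition~\ref{prop:limsup} applies verbatim; the boundary cases $t = 0$ and $t = 1$ give, with the usual conventions for empty unions and intersections, the trivial torsion pairs $(\A, \{0\})$ and $(\{0\}, \A)$.

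For the orthogonality I would argue as follows, treating the first pair (the second being dual). For the inclusion $\bigcap_{s>t}\F_s \subseteq \mathcal{U}^{\perp}$: given $Y \in \bigcap_{s>t}\F_s$ and $X \in \mathcal{U}$, choose $r_0 > t$ with $X \in \T_{r_0}$; then $Y \in \F_{r_0}$, and since $(\T_{r_0},\F_{r_0})$ is a torsion pair we get $\Hom_\A(X,Y) = 0$, so $Y \in \mathcal{U}^{\perp}$. Conversely, if $Y \in \mathcal{U}^{\perp}$ then for each $s > t$ we have $\T_s \subseteq \mathcal{U}$, hence $\Hom_\A(X,Y) = 0$ for all $X \in \T_s$, i.e.\ $Y \in \T_s^{\perp} = \F_s$; as this holds for every $s > t$, we conclude $Y \in \bigcap_{s>t}\F_s$. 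Thus $\mathcal{U}^{\perp} = \bigcap_{s>t}\F_s$, and $\bigl(\bigcup_{r>t}\T_r,\ \bigcap_{s>t}\F_s\bigr)$ is the torsion pair associated to the torsion class $\mathcal{U}$. The second pair is handled by the mirror-image computation, with ${}^{\perp}(\,\cdot\,)$ in place of $(\,\cdot\,)^{\perp}$: one shows ${}^{\perp}\mathcal{G} = \bigcap_{r<t}\T_r$ using that $\F_r \subseteq \mathcal{G}$ for every $r < t$ and that ${}^{\perp}\F_r = \T_r$.

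The only real subtlety — and the step I would flag — is that the argument is genuinely asymmetric: the right orthogonal distributes over a \emph{union} of torsion classes but not over an intersection, so one must compute $\mathcal{U}^{\perp}$ from the torsion-class side for the first pair, and symmetrically compute ${}^{\perp}\mathcal{G}$ from the torsion-free side for the second pair; trying to identify, say, ${}^{\perp}\bigl(\bigcap_{s>t}\F_s\bigr)$ directly would not obviously recover $\bigcup_{r>t}\T_r$. Everything else is a routine unwinding of the torsion pair axioms together with the stabilisation already secured in Proposition~\ref{prop:limsup}.
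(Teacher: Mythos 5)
Your proof is correct and follows essentially the same route as the paper: establish via Proposition~\ref{prop:limsup} that $\bigcup_{r>t}\T_r$ is a torsion class, invoke the uniqueness of its torsion-free partner, and identify that partner with $\bigcap_{s>t}\F_s$ by the same double-inclusion orthogonality computation. Your explicit remark that the second pair must be handled from the torsion-free side (computing ${}^{\perp}\bigl(\bigcup_{s<t}\F_s\bigr)$ rather than $\bigl(\bigcap_{r<t}\T_r\bigr)^{\perp}$) is exactly the mirror-image argument the paper leaves implicit in its ``being the other proof very similar.''
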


\begin{proof}
We show that 
$$\left( \bigcup_{r>t} \T_r ,  \bigcap_{s>t} \F_s
\right)$$
is a torsion pair for every $t \in [0,1]$, being the other proof very similar.

We have by Proposition \ref{prop:limsup} that $\bigcup\limits_{r>t} \T_r$ is a torsion class for all $t\in [0,1]$. 
Then there is a torsion free class $\F'$ in $\A$ such that $\left(\bigcup\limits_{r>t} \T_r , \F' \right)$ is a torsion pair
where $\F'=(\bigcup\limits_{r>t} \T_r)^\perp$.
We show that $\F' = \bigcap\limits_{s>t} \F_s$ by double inclusion. 
Let $Y \in \bigcap\limits_{s>t} \F_s$ and consider $X\in \bigcup\limits_{r>t} \T_r$.
Then there exists a $t_0 > t$ such that $X \in \T_{t_0}$. 
On the other hand, $Y \in \F_s$ for all $s > t$.
In particular $Y \in \F_{t_0}$.
Hence $\Hom_{\A}(X, Y)=0$ because $(\T_{t_0}, \F_{t_0})$ is a torsion pair.
This implies that $\bigcap\limits_{s>t} \F_s \subseteq \F'$.
In the other direction, let $Y \in \F'$ and consider $X\in \T_{t_0}$ for some $t_0 > t$. 
Then we have that $\Hom_\A (X,Y)=0$ because $\T_{t_0} \subseteq \bigcup\limits_{r>t} \T_r$.
Hence, $\F' \subseteq \F_{t_0}$ for all $t_0 > t$. 
In particular, $\F' \subseteq \bigcap\limits_{s>t} \F_s$.
\end{proof}

Let $\eta$ be a chain of torsion classes in $\A$ and let $M$ be an object in $\A$.
Then $M$ induces two subsets of $[0,1]$ naturally:
$$\J^-_{M} := \{ s : M \in \T_s \}   \text{\qquad and \qquad} \J^+_{M} := \{r : M \not \in \T_r\}. $$
Note that if $M$ is non-zero then both sets defined above are non empty since $0\in\J^-_{M}$ and $1\in\J^+_{M}$

\begin{lemma}\label{lem:Dedekindcuts}
Let $\eta\in \CT(\A)$ be a chain of torsion classes, $M$ be a non-zero object of $\A$ and let $t= \inf \J_{M}^+ = \sup \J_{M}^-$.
Then 
$$\bigcup\limits_{r\in \mathcal{J}_M^+} \T_r \subseteq \T_{t} \subseteq \bigcap\limits_{r\in \mathcal{J}_M^-} \T_r.$$
Moreover either $\T_t = \bigcup\limits_{r\in \mathcal{J}_M^+} \T_r$ or $\T_t = \bigcap\limits_{r\in \mathcal{J}_M^-} \T_r$.
\end{lemma}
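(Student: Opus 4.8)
The plan is to treat this as a statement about Dedekind cuts: the hypothesis forces $\J_M^-$ and $\J_M^+$ to be complementary ``halves'' of $[0,1]$, and then the claim is read off from the order-reversing assignment $s\mapsto\T_s$.

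First I would describe the two index sets. Since $\T_s\subset\T_r$ whenever $r\le s$ (Definition~\ref{def:indexchaintorsion}), membership of $M$ in $\T_s$ is inherited by all smaller indices, so $\J_M^-=\{s:M\in\T_s\}$ is downward closed in $[0,1]$ and its complement $\J_M^+=\{r:M\notin\T_r\}$ is upward closed. Both are non-empty: $0\in\J_M^-$ because $\T_0=\A$, and $1\in\J_M^+$ because $\T_1=\{0\}$ and $M\neq 0$. Thus a downward closed set and an upward closed set partition $[0,1]$, which is precisely why $t:=\sup\J_M^-=\inf\J_M^+$ is well defined, and moreover $t$ belongs to exactly one of $\J_M^-$, $\J_M^+$.

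Next I would establish the two inclusions. If $r\in\J_M^+$ then $r\ge t$ by definition of the infimum, hence $\T_r\subset\T_t$; taking the union over all such $r$ gives $\bigcup_{r\in\J_M^+}\T_r\subset\T_t$. Dually, if $r\in\J_M^-$ then $r\le t$, so $\T_t\subset\T_r$, and intersecting over all such $r$ gives $\T_t\subset\bigcap_{r\in\J_M^-}\T_r$. (These unions and intersections are again torsion, resp. torsion-free, classes by the argument of Proposition~\ref{prop:limsup}, but this is not needed for the inclusions themselves.)

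Finally I would handle the dichotomy, which is where the fact that $t$ lies in exactly one of the two sets is used. If $t\in\J_M^-$, then $\T_t$ occurs as one of the terms of $\bigcap_{r\in\J_M^-}\T_r$, hence $\bigcap_{r\in\J_M^-}\T_r\subset\T_t$; combined with the inclusion already proved this yields $\T_t=\bigcap_{r\in\J_M^-}\T_r$. If instead $t\in\J_M^+$, then $\T_t$ occurs as one of the terms of $\bigcup_{r\in\J_M^+}\T_r$, so $\T_t\subset\bigcup_{r\in\J_M^+}\T_r$, and with the other inclusion this gives $\T_t=\bigcup_{r\in\J_M^+}\T_r$. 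I do not expect any genuine obstacle here: the content is purely the order theory of a cut of $[0,1]$, and the only points that require care are keeping straight that $r\le s$ corresponds to $\T_s\subset\T_r$ and that the cut point $t$ is always ``caught'' by exactly one of the two complementary sets; the standing hypothesis $\eta\in\CT(\A)$ is not actually needed for this lemma.
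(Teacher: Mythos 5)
Your proof is correct and follows essentially the same route as the paper: the two inclusions come from the order-reversing property of $s\mapsto\T_s$ applied to $r\ge t$ for $r\in\J_M^+$ and $r\le t$ for $r\in\J_M^-$, and the dichotomy comes from the fact that $\J_M^-$ and $\J_M^+$ partition $[0,1]$, so $t$ lies in exactly one of them and the corresponding inclusion reverses. If anything, you spell out the ``moreover'' step more explicitly than the paper does, and your observation that the hypothesis $\eta\in\CT(\A)$ is not used is accurate.
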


\begin{proof}
By hypothesis we have that $\J_{M}^{+}$ is non empty.
Then there exist $r\in \J_{M}^{+}$ such that $M \not \in \T_r$ and $r$ is a upper bound for $\J_{M}^-$.
Hence there exists $t \in [0,1]$ such that $t= \sup \J_{M}^-$. 
In particular we have that $t\geq s$ for every $s\in\J_M^-$.
Therefore $\T_{t} \subseteq \T_s$ for every $s \in \J_M^-$.
We can then conclude that $\T_{t} \subseteq \bigcap\limits_{r\in \mathcal{J}_M^-} \T_r$.
A similar argument shows that $\bigcup\limits_{r\in \mathcal{J}_M^+} \T_r \subseteq \T_{t}$.
The moreover part of the statement follows from the fact that either $t \in \J_M^+$ or $t\in \J_M^-$.
\end{proof}

\begin{remark}\label{rmk:noT}
Note that Lemma \ref{lem:Dedekindcuts} implies that, for every object $M \in \A$, there is no $t \in [0,1]$ such that the torsion class $\T_t$ in $\eta$ verifies
$$\bigcup\limits_{M \not\in \T_s} \T_s \subsetneq \T_t \subsetneq \bigcap\limits_{M \in \T_s} \T_s.$$
However, this is not true for a general torsion class in $\A$, in the sense that there might be a torsion class $\T$ in $\A$ such that 
$$\bigcup\limits_{M \not\in \T_s} \T_s \subsetneq \T \subsetneq \bigcap\limits_{M \in \T_s} \T_s$$
but in that case it does not belong to $\eta$.
Take for instance the chain of torsion classes $\eta$ built in Example~\ref{ex:nowide} and consider the object $M=\rep{1\\2} \in \mod A$ and the torsion class $\add\left\{\rep{1}\oplus \rep{1\\2}\right\}\subseteq \mod A$. 
Then the following holds.
$$\bigcup\limits_{r\in \mathcal{J}_M^+} \T_r =\add\left\{\rep{1} \oplus \rep{2} \oplus \rep{2\\3} \oplus \rep{1\\2} \oplus \rep{1\\2\\3}\right\} \subsetneq \add\left\{\rep{1}\oplus \rep{1\\2}\right\} \subsetneq \add\left\{\rep{1}\right\}=\bigcap\limits_{r\in \mathcal{J}_M^-} \T_r$$
Also, it is important to remark that $\bigcup\limits_{M \not\in \T_s} \T_s $ is strictly contained in $ \bigcap\limits_{M \in \T_s} \T_s$.
\end{remark}

In the following definition we take a chain of torsion classes $\eta$ and for every $t \in [0,1]$ we construct a subcategory $\P_t \subseteq \A$ that will be essential for the rest of the paper.

\begin{definition}\label{def:quasisemistables}
Let $\eta \in \CT(\A)$.
Then we define $\P_\eta:= \{\P_\eta(t) : t \in [0,1]\}$ where, for every $t \in [0,1]$, $\P_\eta(t)$ is a full-subcategory of $\A$ defined as follows.
$$
\P_\eta(t)= \begin{cases}
\bigcap\limits_{s>0} \F_s  & \text{ if $t = 0$}\\
\bigcap\limits_{s<t} \T_s  \cap \bigcap\limits_{s>t} \F_s & \text{ if $t \in (0,1)$}\\
\bigcap\limits_{s<1} \T_s  & \text{ if $t = 1$}
\end{cases}
$$
For every $\eta \in \CT(\A)$ and $t\in [0,1]$, we say that  $\P_\eta(t) \in \P_\eta$ is the category of \textit{$\eta$-quasisemistable} objects of \textit{phase} $t$.
\end{definition}

\begin{figure}
\begin{center}
    \begin{tikzpicture}
    \begin{scope}
        \clip (0,0) ellipse (4 and 2);
        \clip (3,0) ellipse (4 and 3);
        \clip (-3,0) ellipse (4 and 3);
        \filldraw[gray!40] (0,0) ellipse (4 and 2);
    \end{scope}
    \begin{scope}
    \clip (0,0) ellipse (4 and 2);
    \draw[] (3,0) ellipse (4 and 3);
    \draw[] (2.6,0) ellipse (4 and 3);
    \draw[] (2.8,0) ellipse (4 and 3);
    \draw[] (2.9,0) ellipse (4 and 3);
    \draw[] (2.95,0) ellipse (4 and 3);
    \draw[] (2.975,0) ellipse (4 and 3);
    \draw[] (-3,0) ellipse (4 and 3);
    \draw[] (-2.6,0) ellipse (4 and 3);
    \draw[] (-2.8,0) ellipse (4 and 3);
    \draw[] (-2.9,0) ellipse (4 and 3);
    \draw[] (-2.95,0) ellipse (4 and 3);
    \draw[] (-2.975,0) ellipse (4 and 3);
    \draw[thick] (0,0) ellipse (4cm and 2cm);
    \end{scope}
    \node[] at (0, 0)  {$\P_\eta(t)$};
    \node[] at (2.3, 0)  {$\bigcap\limits_{s<t} \T_s$};
    \node[] at (-2.3, 0)  {$\bigcap\limits_{s>t} \F_s$};
    \node[anchor=north west] at (3, 2)  {$\A$};
\end{tikzpicture}
\end{center}
    \caption{The category of quasisemistable objects (in grey).}
    \label{fig:quasisemistables}
\end{figure}

The term \textit{quasisemistables} will become clear in Section \ref{sec:stab}, where we compare the chain of torsion classes with the stability conditions introduced in \cite{Rudakov1997} and \cite{Bridgeland2007}.
Now we prove the main result of this section.

\begin{theorem}\label{thm:algHNfilt}
Let $\A$ be an abelian category and $\eta$ be a chain of torsion classes in $\CT(\A)$.
Then every non-zero object $M$ of $\A$ admits a Harder-Narasimhan filtration.
\end{theorem}

\begin{proof}
Let $M \in \A$, $r_{n} = \inf{\{r \in [0,1] : M \not\in \T_r\}}$ and consider the canonical short exact sequence 
$$0 \longrightarrow t_{n} M \longrightarrow M \longrightarrow M/t_{n} M \longrightarrow 0$$
with respect to the torsion pair $\left( \bigcup\limits_{r>r_n} \T_r ,  \bigcap\limits_{s>r_n} \F_s
\right)$. 

If $r_n=1$, then $M\in \T_r$ for every $r < 1$, we have that $M \in \bigcap\limits_{r<1} \T_r= \P_\eta(1)$. 
In other words the filtration $0 \subset M$ is a Harder-Narasimhan filtration for $M$. 

Otherwise, we have that $M \not \in \bigcup\limits_{r>r_n} \T_r$, implying that $t_{n} M$ is a proper subobject of $M$.
We claim that $M/t_{n} M \in \P_\eta(r_n)$. 
Indeed $M/t_n M \in \bigcap\limits_{r>r_n} \F_r$ by construction. 
On the other hand, $M \in \T_s$ for all $s < r_n$, which implies that $M \in \bigcap\limits_{s<r_n} \T_s$.
Moreover, $\bigcap\limits_{s<r_n} \T_s$ is a torsion class by Proposition \ref{prop:limsup}.
In particular $\bigcap\limits_{s<r_n} \T_s$ is closed under quotients, so $M/ t_n M \in \bigcap\limits_{s<r_n} \T_s$. 
Hence 
$M/ t_n M \in \P_\eta(r_n)=\bigcap\limits_{s<r_n} \T_s \cap \bigcap\limits_{r>r_n} \F_r$ as claimed. 

Set $M_{n-1}= t_n M$ and repeat the process we did for $M_n$.
That is, set 
$$r_{n-1} = \inf\{r \in [0,1] : M_{n-1} \not\in \T_r\}$$
and consider the canonical short exact sequence 
$$0 \longrightarrow t_{n-1} (M_{n-1}) \longrightarrow M_{n-1} \longrightarrow M_{n-1}/t_{n-1} (M_{n-1}) \longrightarrow 0$$
with respect to the torsion pair $\left(\bigcup\limits_{r>r_{n-1}} \T_r, \bigcap\limits_{s>r_{n-1}} \F_s\right)$. 
Again, if $r_{n-1}=1$ the process stops. 
Otherwise one shows that \mbox{$M_{n-1}/t_{n-1} (M_{n-1}) \in \P_\eta(r_{n-1})$} by the arguments above. 
By construction we have that $M_{n-1} \in \bigcup\limits_{r>r_n} \T_r \subseteq \T_{r_n}$.
In particular, this implies that $r_n < r_{n-1}$.

Inductively, we construct a filtration
$$\dots \subset M_{n-2} \subset M_{n-1} \subset M_n$$
of $M$ such that $M_{n-i} / M_{n-(i+1)}\in \P_\eta(r_{n-i})$, where $r_{n-i} > r_{n-j}$ if and only if $i > j$.
Also, we have that $M_{n-1} \in \bigcap\limits_{s>r_n} \T_s \subseteq \bigcap\limits_{s>0} \T_s$. 
By construction, $M_{n-(i+1)}$ is a proper subobject of $M_{n-i}$ for each $i$ and the objects $M_{n-i}$ and $M_{n-j}$ are pairwise non isomorphic if $i$ is different form $j$.

We claim that the filtration constructed above is finite. 
Suppose to the contrary that it is not. 
Then there exists an infinite set $\{r_{n-i} : i \in \mathbb{N}\} \subset [0,1]$ where $r_{n-i}$ is the phase of the $\eta$-quasisemistable object $M_{n-i}/M_{n-(i+1)}$.
Since $\eta$ is weakly-artinian by definition, we have the existence of a $s(M)= r_{n-k} \in \{r_{n-i} : i \in \mathbb{N}\}$ such that $t_{s(M)}M$ is a subobject of $t_{r_{n-i}} M$ for every $i \in \mathbb{N}$.
In particular we have that $M_{n-k}$ is a subobject of $M_{n-(k-1)}$.
However, by construction, $M_{n-(k-1)}$ is the torsion subobject of $M_{n-k}$. 
In particular this implies that the identity map of $M_{n-{k-1}}$ factors through the inclusion of $M_{n-k}$ to $M_{n-(k-1)}$. 
As a consequence we can conclude that $M_{n-k}$ and $M_{n-(k-1)}$ are isomorphic, a contradiction with the properties of the objects of our construction.

Finally, the uniqueness up to isomorphism of this filtration follows from the fact that in this process each $r_i$ is completely determined by the object $M_{i}$, each object $M_i$ is completely determined by $r_{i+1}$ and that torsion subobjects are completely determined up to isomorphism. 
\end{proof}

It is worth noticing that similar filtrations induced by (generalised) tilting objects already exist in the literature.
We recommend the interested reader to see \cite{MTP2019} and the references therein.

\begin{example}\label{ex:torsionpair2}
In Example~\ref{ex:torsionpair} we have introduced a chain of torsion classes $\eta_{(a,b)} \in \CT(\A)$. 
A small calculation shows that $\P_{\eta_{(a,b)}}= \{\P_{\eta_{(a,b)}}(t) : t \in [0,1]\}$, where $\P_{\eta_{(a,b)}}(t)$ is as follows. 
$$
\P_{\eta_{(a,b)}}(t)= \begin{cases}
 \F  & \text{ if $t = a$}\\
 \T & \text{ if $t = b$}\\
 \{0\}  & \text{otherwise}
\end{cases}
$$
In particular, in this case we have that, for every $M \in \A$, the Harder-Narasimhan filtration given by Theorem \ref{thm:algHNfilt} coincides with the canonical short exact sequence of $M$ with respect to the torsion pair $(\T,\F)$.
\end{example}

\begin{example}
Let $\eta_f \in \CT(\mod \mathbb{C}Q)$ be the chain of torsion classes built in Example~\ref{ex:continousP}.
In this case, $\P_\eta(0)$ corresponds to the additive category generated by all preprojective objects and $\mbox{add}\{M(\lambda, n): f_z(\lambda) = 0\}$, the category $\P_\eta(1)$ is the additive category generated by all preinjective objects and $\mbox{add}\{M(\lambda, n): f_z(\lambda) = 1\}$, and for every $t\in(0,1)$ the category $\P_{\eta_{f}}(t) = \mbox{add}\{M(\lambda, n): f_z(\lambda) = r \text{ and } n \in \mathbb{N}\}\cup\{0\}$. 
The Harder-Narasimhan for every indecomposable $M$ is of the form $0 \subset M$.
\end{example}


\section{A preorder induced by a chain of torsion classes}\label{sec:ordering}

Let $W([0,1])$ be the set of all the words of finite length that can be formed using elements in $[0,1]$.
This set has a natural total order given by the lexicographic order. 
In the previous section we have shown that every chain of torsion classes $\eta \in \CT(\A)$ induces a Harder-Narasimhan filtration in every non-zero object $M$ of $\A$.
In this section we use this result to define a function $\varphi_\eta$ from the non-zero objects of $\A$ to $W([0,1])$ inducing a preorder among the objects of $\A$ and we study some of its properties.

\begin{definition}\label{def:phieta}
Let $\A$ be an abelian category and let $\eta$ be a chain of torsion classes in $\CT(\A)$.
For every non-zero object $M$ in $\A$ with Harder-Narasimhan filtration 
$$M_0 \subset M_1 \subset \dots \subset M_n$$
where, for every $1\leq k\leq n$, $M_k/M_{k-1} \in \P_{\eta}(r_k)$ we associate to $M$ the word
$$\varphi_\eta (M) := r_n r_{n-1} \dots r_1$$
in $W([0,1])$.
We say that $M \leq_\eta N$ if $\varphi_\eta(M) \leq \varphi_\eta(N)$.
\end{definition}

It follows from Theorem~\ref{thm:algHNfilt} that the preorder $\leq_\eta$ is constant in isomorphism classes of objects. 
However, $\varphi_\eta$ is not a partial order in the set of isomorphism classes of objects in $\A$ since $\varphi_\eta$ is not antisymmetric as it follows from the next proposition.

\begin{theorem}\label{thm:quasistables}
Let $\A$ be an abelian category, $\eta$ be a chain of torsion classes in $\CT(\A)$ and let $t,t' \in [0,1]$.
Then:
\begin{enumerate}
\item $\varphi_\eta (M) = t$ if and only if $M \in \P_{\eta}(t)$;
\item  if $M \in \P_{\eta}(t)$ then $\varphi_\eta (M) \leq \varphi_\eta (N)$ for all quotient $N$ of $M$;
\item if $M \in \P_{\eta}(t)$ then $\varphi_\eta (L) \leq \varphi_\eta (M)$ for all subobject $L$ of $M$;
\item If $t' < t$, $M \in \P_{\eta}(t)$ and if $M' \in \P_{t'}$ then $\emph{Hom}_{\A}(M,M')=0$;
\item $\P_{\eta}(t)$ is closed under extensions for all $t \in [0,1]$.
\end{enumerate}
\end{theorem}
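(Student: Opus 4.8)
The proof naturally splits into the five items, and the key technical input throughout is the construction of the Harder-Narasimhan filtration in Theorem \ref{thm:algHNfilt}, together with Lemma \ref{lem:Dedekindcuts} and Proposition \ref{prop:limsup}.

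For item (1), I would observe that if $M \in \P_t$ then the length-one filtration $0 \subset M$ already satisfies the three conditions of Theorem \ref{thm:algHNfilt}; by the uniqueness part of that theorem, this \emph{is} the Harder-Narasimhan filtration of $M$, so $\varphi_\eta(M) = t$ by Definition \ref{def:phieta}. Conversely, if $\varphi_\eta(M) = t$ then $\varphi_\eta(M)$ is a one-letter word, so the HN filtration of $M$ has length one, which forces $M = M_n/M_0 \in \P_t$.

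For items (2) and (3) I would argue with the HN filtration of $M$ directly, but the cleanest route uses the description of $\P_t$ from Definition \ref{def:quasisemistables}. Take $M \in \P_t$ with $t \in (0,1)$ (the boundary cases $t=0,1$ are analogous and easier). Then $M \in \bigcap_{s<t}\T_s$ and $M \in \bigcap_{s>t}\F_s$. For a quotient $N$ of $M$: since $\bigcap_{s<t}\T_s$ is a torsion class (Proposition \ref{prop:limsup}) and hence closed under quotients, $N \in \bigcap_{s<t}\T_s$, i.e. $N \in \T_s$ for all $s < t$. This means $\J^-_N \supseteq [0,t)$, so in the HN filtration of $N$ every phase $r_k$ satisfies $r_k \geq t$ (using Lemma \ref{lem:Dedekindcuts} to locate where $N$ leaves the $\T_r$'s), whence the first letter of $\varphi_\eta(N)$ is $\geq t = \varphi_\eta(M)$ and so $\varphi_\eta(M) \leq \varphi_\eta(N)$ lexicographically. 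Item (3) is dual: a subobject $L$ of $M$ lies in $\bigcap_{s>t}\F_s$ since torsion-free classes are closed under subobjects, so $L \notin \T_s$ for any $s>t$, forcing every phase appearing in the HN filtration of $L$ to be $\leq t$, hence $\varphi_\eta(L) \leq t = \varphi_\eta(M)$.

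For item (4), suppose $t' < t$, $M \in \P_t$, $M' \in \P_{t'}$, and let $f\colon M \to M'$ be a morphism with image $I$. Then $I$ is a quotient of $M$, so by (2) (or directly: $I \in \bigcap_{s<t}\T_s \subseteq \T_{t'}$ since $t' < t$). But $I$ is also a subobject of $M'$, so $I \in \bigcap_{s>t'}\F_s$; in particular, picking any $s$ with $t' < s < t$, we get $I \in \T_s$ (from $I \in \bigcap_{u<t}\T_u$) and $I \in \F_s$, so $I \in \T_s \cap \F_s = \{0\}$ because $(\T_s,\F_s)$ is a torsion pair. Hence $f = 0$. (For the boundary cases one picks $s$ just above $t'$ and uses $I \in \T_{t'} \cap \F_s$ together with $\F_s \subseteq \F_{t'}$ or a limiting argument.) Finally, item (5): given a short exact sequence $0 \to M' \to E \to M'' \to 0$ with $M', M'' \in \P_t$, for $t \in (0,1)$ we have $M', M'' \in \bigcap_{s<t}\T_s$, which is a torsion class and hence closed under extensions, so $E \in \bigcap_{s<t}\T_s$; dually $M', M'' \in \bigcap_{s>t}\F_s$, a torsion-free class closed under extensions, so $E \in \bigcap_{s>t}\F_s$; therefore $E \in \P_t$. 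The cases $t=0$ and $t=1$ use only one of the two intersections.

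The main obstacle I anticipate is the careful bookkeeping in items (2) and (3) connecting membership in $\bigcap_{s<t}\T_s$ (resp. $\bigcap_{s>t}\F_s$) to a bound on \emph{all} the phases occurring in the HN filtration, not just the top one — this is where Lemma \ref{lem:Dedekindcuts} and the inductive construction in the proof of Theorem \ref{thm:algHNfilt} must be invoked precisely, and where the lexicographic comparison of words of possibly different lengths needs a clean argument. Everything else reduces to the standard closure properties of torsion and torsion-free classes established in Proposition \ref{prop:limsup}.
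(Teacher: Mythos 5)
Your proposal is correct and follows essentially the same route as the paper: item 1 via uniqueness of the Harder--Narasimhan filtration, items 2--3 via the closure properties of $\bigcap_{s<t}\T_s$ and $\bigcap_{s>t}\F_s$ together with the identification of the first letter of $\varphi_\eta$, and item 5 as the intersection of a torsion class and a torsion-free class. The only (harmless) deviation is in item 4, where you argue directly that the image lies in $\T_s\cap\F_s=\{0\}$ for some $s$ with $t'<s<t$, whereas the paper deduces a contradiction from items 2 and 3; both amount to the same observation.
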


\begin{proof}
\textit{1.}  Let $M \in \P_{\eta}(t)$. 
Then $0 \subset M$ is a Harder-Narasimhan filtration for $M$.
Therefore, $0 \subset M$ is the Harder-Narasimhan filtration for $M$ because it is unique up to isomorphism.
Hence $\varphi_\eta(M)=t$ by Definition \ref{def:phieta}.

In the other direction, let $M$ such that $\varphi_\eta (M) = t$. 
Then, by definition of $\varphi_\eta$, we have that $0 \subset M$ is the Harder-Narasimhan filtration of $M$.
Moreover, Theorem \ref{thm:algHNfilt} also implies that $M / 0 \cong M \in \P_{\eta}(t)$, as claimed.

\textit{2. and 3.} We only show 2. since the proof of 3. is similar.
Let $M$ be an object of $\P_{\eta}(t)$. 
Then, $\varphi_\eta (M) = t $ by \textit{1}.
We also have that $M \in \P_{\eta}(t) =\bigcap\limits_{r<t} \T_{r}  \cap \bigcap\limits_{s>t} \F_s$.
In particular $M \in \bigcap\limits_{r<t} \T_{r}$.
Therefore $N \in \bigcap\limits_{r<t} \T_{r}$ since $\bigcap\limits_{r<t} \T_{r}$ is a torsion class.

Now, from the proof of Theorem \ref{thm:algHNfilt} we have that the first letter $r'_{n'}$ of 
$$\varphi_\eta (N) = r'_{n'} r'_{n'-1} \dots r'_1$$
is $r'_{n'}= \inf \{ r : N \not \in \T_r\}$.
The fact that $N \in \bigcap\limits_{r<t} \T_{r}$ implies that $t \leq r'_{n'}$.
Hence $\varphi_\eta (M) \leq \varphi_\eta (N)$ for every non-zero quotient $N$ of $M$.

\textit{4.} Let $t, t' \in [0,1]$ such that $t' < t$, $M \in \P_{\eta}(t)$, $M'\in \P_{\eta}(t')$ and $f: M \to M'$. 
Then we have that the image $\im f$ of $f$ is a quotient of $M$ and a subobject of $M'$. 
Then \textit{2.} implies that $\varphi_\eta (\im f) \geq t$ while \textit{3.} implies that $\varphi_\eta (\im f) \leq t'$. 
Hence we have that $\varphi_\eta (\im f)$ is not well-defined. 
Since $\varphi_\eta$ is well-defined for all non-zero objects, we can conclude that $f$ is the zero morphism.
Therefore $\Hom_\A (M,M')=0$, as claimed.

\textit{5.} Recall that $\P_{\eta}(t)$ is defined as $\P_t :=\bigcap\limits_{s<t} \T_s  \cap \bigcap\limits_{s>t} \F_s$.
Then, $\P_{\eta}(t)$ is the intersection of a torsion class and a torsion free class. 
Therefore $\P_{\eta}(t)$ is always closed under extensions.
\end{proof}

\begin{remark}
In the fifth point of the previous result we showed that $\P_{\eta}(t)$ is closed under extensions for every $t\in [0,1]$. 
In fact, it follows from \cite[Theorem 3.2]{Tattar2019} that $\P_{\eta}(t)$ is a full quasi-abelian subcategory of $\A$ for every $t \in [0,1]$.
The following example shows that, in general, $\P_{\eta}(t)$ is not a wide subcategory of $\A$.
\end{remark}

\begin{example}\label{ex:nowide2}
We continue with the chain of torsion classes of Example~\ref{ex:nowide}.
This chain of torsion classes $\eta$ is such that for $t\in [0,1]$, the torsion class $\T_t$ is of the following form.  

$$
\T_t= \left\{\begin{array}{ll}
\mod A & \text{ if $t=0$}\\
add\left\{\rep{1} \oplus \rep{2} \oplus \rep{2\\3} \oplus \rep{1\\2} \oplus \rep{1\\2\\3}\right\} & \text{ if $t \in (0,1/3]$}\\
add\{\rep{1}\} & \text{ if $ t \in (1/3, 2/3]$}\\
\{0\} & \text{ if $t\in (2/3,1]$}
\end{array}
\right.
$$

For this chain of torsion classes we have that $\P_{\eta}(t) =\{0\}$ for every $t$ different to $0$, $1/3$ or $2/3$, where $\P_{\eta}(0)=add\{\rep{3}\}$, $\P_{\eta}(1/3)=add\left\{\rep{2} \oplus \rep{2\\3} \oplus \rep{1\\2} \oplus \rep{1\\2\\3}\right\}$ and $\P_{\eta}(2/3)=add\{\rep{1}\}$, respectively.
Now take the inclusion $f: \rep{2\\3} \to \rep{1\\2\\3}$. 
Here, we have that $\rep{2\\3}$ and $\rep{1\\2\\3}$ belong to $\P_{\eta}(1/3)$, however $\coker f \cong \rep{1}$ does not belong to $\P_{\eta}(1/3)$. 
So $\P_{\eta}(1/3)$ is not closed under cokernels.
Similarly, we can take the epimorphism $g : \rep{1\\2\\3} \to \rep{1\\2}$.
Here $\rep{1\\2\\3}$ and $\rep{1\\2}$ belong to $\P_{\eta}(1/3)$, but $\ker g \cong \rep{3}$ does not. 
Hence, is not $\P_{\eta}(1/3)$ closed under kernels either.
\end{example}

Take a chain of torsion classes $\eta \in \CT(\A)$ and consider a non-zero object $M \in \A$. 
In some sense, Theorem \ref{thm:quasistables}.2 says that it is expected that $\varphi_{\eta} (M) \leq \varphi_\eta (N)$ whenever $N$ is a quotient of $M$. 
Therefore it is of interest to know if there exists a quotient $N'$ of $M$ such that $\varphi_\eta (N') \leq \varphi_{\eta} (N)$ for every quotient $N$ of $M$. 
The following proposition shows that such objects always exists.

\begin{proposition}\label{prop:MDQ}
Let $\eta\in \CT(\A)$ be a chain of torsion classes and let $M \in \A$ with Harder-Narasimhan filtration $M_0 \subset M_1 \subset \dots \subset M_n$.
Define $M_\eta^- := M_n/M_{n-1}$ and $M_\eta^+:= M_1$.

Then $\varphi_\eta (N) \geq \varphi_\eta (M_\eta^-)$ for all quotient $N$ of $M$.
Moreover, if the equality holds there exists an epimorphism $p: M_\eta^- \to N$ making the following diagram commutative.
\begin{center}
	\begin{tikzcd}
   	 M \ar[r] \ar[d, "id_M"]& M_\eta^- \ar[r]\ar[d, "p"] & 0 \\
     M \ar[r]& N \ar[r] & 0 
	\end{tikzcd}
\end{center}

Dually, $\varphi_\eta (L) \leq \varphi_\eta (M_\eta^+)$ for all subobject $N$ of $M$.
Moreover, if the equality holds there exists an monomorphism $i: L \to M_\eta^+$ making the following diagram commutative.
\begin{center}
	\begin{tikzcd}
   	 0 \ar[r] &	L \ar[r]\ar[d, "i"] & M \ar[d, "id_M"] \\
     0 \ar[r] &	M_\eta^+ \ar[r] & M 
	\end{tikzcd}
\end{center}
\end{proposition}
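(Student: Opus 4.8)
\emph{Strategy.} The plan is to read $M_\eta^-$ and $M_\eta^+$ off the recursive construction in the proof of Theorem~\ref{thm:algHNfilt} and then to compare first letters of words in the lexicographic order on $W([0,1])$. For the first assertion, recall from that proof that $r_n=\inf\{r:M\notin\T_r\}$, that $M_{n-1}$ is the torsion subobject of $M$ for the torsion pair $\bigl(\bigcup_{r>r_n}\T_r,\bigcap_{s>r_n}\F_s\bigr)$, and that $M_\eta^-=M/M_{n-1}\in\P_{r_n}$, so $\varphi_\eta(M_\eta^-)=r_n$ by Theorem~\ref{thm:quasistables}(1). For a non-zero quotient $q\colon M\twoheadrightarrow N$ one has $M\in\bigcap_{s<r_n}\T_s$, a torsion class by Proposition~\ref{prop:limsup} and hence closed under quotients, so $N\in\bigcap_{s<r_n}\T_s$; thus the first letter $\inf\{r:N\notin\T_r\}$ of $\varphi_\eta(N)$ is $\geq r_n$, which already gives $\varphi_\eta(N)\geq\varphi_\eta(M_\eta^-)$ because $r_n$ is a one-letter word. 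If equality holds then $\varphi_\eta(N)=r_n$, so $N\in\P_{r_n}\subseteq\bigcap_{s>r_n}\F_s$ by Theorem~\ref{thm:quasistables}(1); now $q(M_{n-1})$ is a quotient of $M_{n-1}\in\bigcup_{r>r_n}\T_r$ and a subobject of $N$, hence lies in both the torsion class $\bigcup_{r>r_n}\T_r$ and its torsion-free class $\bigcap_{s>r_n}\F_s$, so $q(M_{n-1})=0$. Therefore $M_{n-1}\subseteq\ker q$ and $q$ factors as $M\twoheadrightarrow M/M_{n-1}=M_\eta^-\xrightarrow{\,p\,}N$ with $p$ necessarily an epimorphism, and the square of the statement commutes by construction.

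\emph{The dual assertion.} One may deduce it by applying the first assertion in $\A^{\mathrm{op}}$, where $\eta$ becomes the chain $\{\F_{1-s}\}_{s}$, the subcategory $\P_t$ becomes $\P_{1-t}$, the Harder--Narasimhan filtration of $M$ is reversed, and $M_\eta^+$ plays the role of $M_\eta^-$. Alternatively one argues directly (for $n\geq 2$; for $n=1$, $M_\eta^+=M$ and the claim is Theorem~\ref{thm:quasistables}(3)), the key point being that $M_\eta^+=M_1$ is the torsion subobject of $M$ for the torsion pair $\bigl(\bigcap_{u<r_1}\T_u,\bigcup_{u<r_1}\F_u\bigr)$: indeed $M/M_1$ is an iterated extension of the $\P_{r_k}$ with $k\geq 2$, each of which sits in $\F_u$ for any $u\in(r_2,r_1)$, so $M/M_1\in\bigcup_{u<r_1}\F_u$, while $M_1\in\P_{r_1}\subseteq\bigcap_{u<r_1}\T_u$. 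Given a non-zero $L\subseteq M$, write $s=\inf\{r:L\notin\T_r\}$ for the first letter of $\varphi_\eta(L)$. If $s>r_1$ then $L\in\bigcap_{u<r_1}\T_u$, so $L\subseteq M_1$, and choosing $u\in(r_1,s)$ we would get $L\subseteq M_1\in\P_{r_1}\subseteq\F_u$ together with $L\in\T_u$, forcing $L=0$; hence $s\leq r_1$ and $\varphi_\eta(L)\leq r_1=\varphi_\eta(M_\eta^+)$. If equality holds then $s=r_1$, so again $L\subseteq M_1$; being a subobject of $M_1\in\P_{r_1}$ it lies in $\bigcap_{u>r_1}\F_u$, and combined with $L\in\bigcap_{u<r_1}\T_u$ this gives $L\in\P_{r_1}$. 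Then the inclusion $L\hookrightarrow M_1=M_\eta^+$ is the desired monomorphism $i$, and the square commutes.

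\emph{Main difficulty.} There is no real homological obstacle: the only input is that $\Hom$ vanishes across a torsion pair, which has already been used repeatedly. The delicate part is purely bookkeeping: one must identify $M_\eta^-$ and $M_\eta^+$ with the torsion subobjects of $M$ for the appropriate ``limit'' torsion pairs built from $\eta$, and keep careful track of the fact that a one-letter word $t$ bounds $\varphi_\eta$ exactly through comparison of first letters (equivalently, smallest phases), with the borderline case $s=t$ governed by membership in $\P_t$ via Theorem~\ref{thm:quasistables}(1).
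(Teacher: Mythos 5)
Your proof of the quotient half is correct and follows the same route as the paper: the inequality comes from comparing the first letter $\inf\{r: N\notin\T_r\}$ of $\varphi_\eta(N)$ with $r_n=\varphi_\eta(M_\eta^-)$, and the factorisation in the equality case comes from the canonical sequence for the torsion pair $\bigl(\bigcup_{r>r_n}\T_r,\bigcap_{s>r_n}\F_s\bigr)$; your explicit verification that $q(M_{n-1})=0$ is just the paper's appeal to ``the properties of the canonical short exact sequence'' spelled out. (The paper's own displayed inequality chain in this proof is garbled, so your cleaner write-up is welcome.)

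One point in the dual half deserves attention. For quotients, ``first letter $\geq r_n$'' does imply $\varphi_\eta(N)\geq r_n$ in the lexicographic order on $W([0,1])$, because words are written with increasing letters and a proper prefix is smaller than its extensions. For subobjects the situation is not symmetric: from $s=\inf\{r:L\notin\T_r\}\leq r_1$ you cannot immediately conclude $\varphi_\eta(L)\leq r_1$, since if $s=r_1$ and $\varphi_\eta(L)$ has length $>1$ then $\varphi_\eta(L)=r_1 s_{m-1}\cdots s_1>r_1$. You need to rule out this borderline case, e.g.\ by the same $\Hom$-vanishing you already use: if $s=r_1$ then $L\subseteq M_1$, hence $L\in\bigcap_{u>r_1}\F_u\cap\bigcap_{u<r_1}\T_u=\P_{r_1}$ and $\varphi_\eta(L)$ is the single letter $r_1$. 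Your ``equality'' paragraph contains exactly this computation, but as written it is invoked only after assuming $\varphi_\eta(L)=r_1$, so the inequality proof is logically incomplete until that case analysis is moved up. Relatedly, the reduction to $\A^{\mathrm{op}}$ is not as immediate as stated, because the word-reversal $s_m\cdots s_1\mapsto(1-s_1)\cdots(1-s_m)$ is not order-reversing for the lexicographic order; the direct argument (suitably reordered) is the safer one. These are repairable bookkeeping issues, not a wrong approach, and the paper itself omits the dual case entirely.
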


\begin{proof}
We prove the statement for the quotients, being similar the proof for subobjects.
If $N$ be a quotient of $M$, then $N \in \T_s$ for all torsion class $\T_s$ such that $M \in \T_s$.
Thus $t = \inf \{ r \in [0,1] : M \not \in \T_r \} \leq \inf \{r' \in [0,1] : N \in \T_{r'}\}$.
Hence $\varphi_\eta (M_n / M_{n-1}) = t \geq \inf \{ r \in [0,1] : M \not \in T_r \} \geq \varphi_\eta (N)$, as claimed.

Now we prove the moreover part of the statement. 
If $\varphi_\eta (N) = \varphi_\eta (M_n/M_{n-1}) = t$, we have that $N \in \P_{\eta}(t)$ by Theorem \ref{thm:quasistables}.1.
In particular $N \in \bigcap\limits_{s>t} \F_s$. 
But, $M_n / M_{n-1}$ is the torsion free quotient of $M$ with respect to the torsion pair $$\left( \bigcup\limits_{s>t} \T_s , \bigcap\limits_{s>t} \F_s \right).$$
Therefore, the properties of the canonical short exact sequence imply the existence of a surjective map $p: M_n/M_{n-1} \to N$ making diagram 
\begin{center}
	\begin{tikzcd}
   	 M \ar[r] \ar[d, "id_M"]& M_n/M_{n-1} \ar[r]\ar[d, "p"] & 0 \\
     M \ar[r]& N \ar[r] & 0 
	\end{tikzcd}
\end{center}
commutative.
This finishes the proof.
\end{proof}

\begin{remark}
Following the terminology of stability conditions, given a chain of torsion classes $\eta \in \CT(\A)$ and a non-zero object $M$ of $\A$ we say that $M^-_\eta$ is the \textit{maximally destabilizing quotient} of $M$ with respect to $\eta$.
Dually, we say that $M^+_\eta$ is the \textit{maximally destabilizing subobject} of $M$ with respect to $\eta$.
\end{remark}

Recall that an object $M$ in a subcategory $\mathcal{B}$ of $\A$ is said to be \textit{relatively simple} if the only subobjects of $M$ that belong to $\mathcal{B}$ are $0$ and $M$ itself. 
Dually, an object $M$ in a subcategory $\mathcal{B}$ of $\A$ is said to be \textit{relatively cosimple} if the only quotients of $M$ that belong to $\mathcal{B}$ are $0$ and $M$ itself. 
Also, we say that an object $M$ in $\A$ is a \textit{brick} if its endomorphism algebra End$_\A (M)$ is a division ring.

In the following proposition we further study the morphisms between objects in $\P_{\eta}(t)$ for any $t\in [0,1]$.

\begin{proposition}\label{prop:Endstables}
Let $\eta \in \CT(\A)$, $t \in [0,1]$ and consider a non-zero morphism $f: M \to N$, where $M,N \in \P_{\eta}(t)$. 
Then following statements hold:
\begin{enumerate}
\item The object $\im f$ belongs to $\P_{\eta}(t)$.
\item If $N$ is relatively simple in $\P_{\eta}(t)$ then $f$ is an epimorphism;
\item If $M$ is relatively cosimple in $\P_{\eta}(t)$ then $f$ is a monomorphism;
\item If $M$ is relatively cosimple object and $N$ is relatively simple in $\P_{\eta}(t)$ then $f$ is an isomorphism;
\item If $M$ is a relatively simple and cosimple object in $\P_{\eta}(t)$, then $M$ is a brick.
\end{enumerate}
\end{proposition}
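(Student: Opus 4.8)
The plan is to prove the five statements essentially in the order given, bootstrapping each from the previous ones and from Theorem~\ref{thm:quasistables}. For statement 1, I would use that $\im f$ is simultaneously a quotient of $M$ and a subobject of $N$. Since $M\in\P_t$, Theorem~\ref{thm:quasistables}.2 gives $\varphi_\eta(\im f)\geq t$, and since $N\in\P_t$, Theorem~\ref{thm:quasistables}.3 gives $\varphi_\eta(\im f)\leq t$; hence $\varphi_\eta(\im f)=t$, and Theorem~\ref{thm:quasistables}.1 yields $\im f\in\P_t$. (One checks $\im f\neq 0$ because $f\neq 0$.)

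For statements 2 and 3, I would apply statement 1. If $N$ is relatively simple in $\P_t$, then $\im f$ is a subobject of $N$ lying in $\P_t$, so $\im f=0$ or $\im f=N$; since $f\neq 0$ the first is excluded, so $\im f=N$ and $f$ is an epimorphism. Dually, if $M$ is relatively cosimple in $\P_t$, then $\im f$ is a quotient of $M$ in $\P_t$, forcing $\im f=M$, so $f$ is a monomorphism. Statement 4 is then immediate: under both hypotheses $f$ is simultaneously a monomorphism and an epimorphism, hence an isomorphism in the abelian category $\A$.

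For statement 5, let $M$ be relatively simple and cosimple in $\P_t$, and take any non-zero $f\in\End_\A(M)$. Then $f:M\to M$ with $M\in\P_t$ on both sides, so statement 4 applies with $N=M$, giving that $f$ is an isomorphism. Thus every non-zero endomorphism of $M$ is invertible, which is exactly the statement that $\End_\A(M)$ is a division ring, i.e. $M$ is a brick.

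The only point requiring a little care — the main obstacle, such as it is — is making sure that $\im f$ is a \emph{non-zero} object so that $\varphi_\eta$ and the category-membership statements apply to it, and that ``quotient'' and ``subobject'' in Theorem~\ref{thm:quasistables}.2--3 are interpreted so as to include $\im f$ (via the canonical epi $M\twoheadrightarrow\im f$ and mono $\im f\hookrightarrow N$ of the image factorisation in $\A$). Once this is set up, each part is a direct application of the already-established properties of the categories $\P_t$, so there is no substantial additional difficulty.
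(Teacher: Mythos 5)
Your proposal is correct and follows essentially the same route as the paper: part 1 via $\im f$ being simultaneously a quotient of $M$ and a subobject of $N$ together with Theorem~\ref{thm:quasistables}.1--3, parts 2--3 by combining part 1 with relative (co)simplicity, and parts 4--5 as immediate consequences. Your extra remark about checking $\im f\neq 0$ is a reasonable point of care that the paper leaves implicit.
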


\begin{proof}
\textit{1.}
Let $M$ and $N$ be two objects of $\P_{\eta}(t)$ and $f: M \to N$ be a non-zero morphism.
Then $\im f$ is a quotient of $M$ and a subobject of $N$.
Therefore, Theorem \ref{thm:quasistables}.3 and Theorem \ref{thm:quasistables}.2 imply that $t \leq \varphi_\eta (\im f) \leq t$.
So, we can conclude that $\varphi_\eta (\im f)=t$.
Hence we have that $ \im f \in \P_{\eta}(t)$ by Theorem \ref{thm:quasistables}.1.

\textit{2.}
We have that $\im f \in \P_{\eta}(t)$ by 1.
Also, we have by hypothesis that $N$ is relatively simple, so $\im f$ is either $0$ or $N$. 
Since $f$ is non-zero, we have that $\im f \cong N$.
Hence $f$ is an epimorphism.

\textit{3.}
We have that $\im f \in \P_{\eta}(t)$ by 1.
Also, we have by hypothesis that $M$ is relatively cosimple, so $\im f$ is either $0$ or $M$. 
Since $f$ is non-zero, we have that $\im f \cong M$.
Hence $f$ is a monomorphism.

\textit{4.}
Follow directly from 2. and 3.

\textit{5.}
Is a particular case of 4.
\end{proof}


\section{Chains of torsion classes and stability conditions}\label{sec:stab}


Harder-Narasimhan filtrations first arose in algebraic geometry as a way to understand all vector bundles of the projective line in terms of stable vector bundles. 
The concept of stability was then adapted to quiver representations in \cite{King1994} and \cite{Schofield1991}.
Later, stability conditions for abelian categories were studied in \cite{Rudakov1997}.

This section is devoted to compare important concepts of stability conditions, such as Harder-Narasimhan filtrations or (semi)stable objects, with the results we obtained in Section \ref{sec:ordering}.
We start the present section recalling the definition of a stability function, which is equivalent to the stability conditions defined in \cite{Rudakov1997} (see \cite[Section 2]{BSTpath}).
In this section we consider noetherian abelian categories in order to avoid technical difficulties.

\begin{definition}
Let $\A$ be a noetherian abelian category and let $\phi$ be a function $\phi :  Obj^*(\mathcal{A}) \to [0,1]$ sending the non-zero objects $Obj^*(\A)$ of $\A$ to $[0,1]$ which is constant on isomorphism classes. 
We say that $\phi$ is a stability function if for each short exact sequence  $0 \to L \to M \to N \to 0$ of non-zero objects in $\mathcal{A}$ one has the so-called \textit{see-saw} (or \textit{teeter-totter}) property:
$$
\begin{array}{ll}
\text{either} & \phi(L) < \phi(M) < \phi(N), \\
\text{or} & \phi(L) > \phi(M) > \phi(N),\\
\text{or} & \phi(L) = \phi(M) = \phi(N).
\end{array}
$$
\end{definition}
We recall the definition of $\phi$-(semi)stable objects.
\begin{definition}
Let $\phi:Obj^*(\mathcal{A})\rightarrow [0,1]$ be a stability function on $\mathcal{A}$. 
A non-zero object $M$ in $\mathcal{A}$ is said to be \textit{$\phi$-stable} (or \textit{$\phi$-semistable}) if every nontrivial subobject $L\subset M$ satisfies $\phi(L)< \phi(M)$ ($\phi(L)\leq \phi(M)$, respectively).
\end{definition}

As for chains of torsion classes, it is not true that every stability function induces Harder-Narasimhan filtrations.
In order to be able give proof of the main results in \cite{Rudakov1997}, Rudakov introduced the notions of quasi-noetherian and weakly-artinian abelian categories.
Here, instead of imposing these conditions on to the categories we impose them to the stability conditions.

\begin{definition}\label{def:StabA}
Let $\phi: Obj \A \to [0,1]$ be a stability function. 
We say that $\phi$ is \textit{quasi-noetherian} if all chains of proper subobjects $X_1 \subset X_2 \subset \dots \subset X$ of $X$ such that $\phi(X_i) \leq \phi(X_{i+1})$ for every $i\in \mathbb{N}$ stabilises. 
Similarly, we say that $\phi$ is \textit{weakly-artinian} if all chains of proper subobjects $X\supset X_1 \supset X_2 \supset \dots $ of $X$ such that $\phi(X_i) \leq \phi(X_{i+1})$ for every $i\in \mathbb{N}$ stabilises. 

We denote by $\Stab\A$ the set of all quasi-noetherian and weakly-artinian stability conditions. 
\end{definition}

One of the main objectives of \cite{BSTpath} was to study the relation between stability functions and torsion pairs.
It was proven that given a stability function $\phi$ one can define a torsion class for every $t \in [0,1]$.
We include the proof for the convenience of the reader.

\begin{proposition}\cite[Proposition 2.15, Proposition 2.17]{BSTpath}\label{prop:stabtorsion}
Let $\phi:Obj^*(\mathcal{A})\rightarrow [0,1]$ be a stability function in $\Stab(\A)$ and let $p \in [0,1]$. Then:
\begin{itemize}
\item $\T_{\geq p} := \{ M \in \A : \phi(N) \geq p \text{ for all quotient $N$ of $M$} \} \cup \{ 0 \}$ is a torsion class;
\item $\T_{>p} := \{ M \in \A : \phi(N) > p \text{ for all quotient $N$ of $M$} \} \cup \{ 0 \}$ is a torsion class;
\item $\F_{\leq p} := \{ M \in \A : \phi(L) \leq p \text{ for all subobject $L$ of $M$} \} \cup \{ 0 \}$ is a torsion free class;
\item $\F_{< p} := \{ M \in \A : \phi(L) < p \text{ for all subobject $L$ of $M$} \} \cup \{ 0 \}$ is a torsion free class.
\end{itemize}
Moreover, $(\T_{\geq p}, \F_{<p})$ and $(\T_{> p}, \F_{\leq p})$ are torsion pairs in $\A$.
\end{proposition}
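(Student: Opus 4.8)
The plan is to verify the four subcategory axioms (closure under quotients/subobjects, the closure-under-extensions axiom built into the intuitive characterization, and the existence of canonical short exact sequences) directly from the see-saw property, and then to identify the orthogonal classes. I will prove that $\T_{\geq p}$ is a torsion class and leave the remaining three cases to the (formally dual / strictly analogous) arguments; for instance $\T_{>p}$ is handled by replacing ``$\geq$'' with ``$>$'' throughout, while $\F_{\leq p}$ and $\F_{<p}$ are obtained by the opposite-category dual, since a torsion-free class in $\A$ is a torsion class in $\A^{\mathrm{op}}$ and $\phi$ induces a stability function on $\A^{\mathrm{op}}$ by $M \mapsto 1-\phi(M)$ (or simply by reversing inequalities).

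First I would show $\T_{\geq p}$ is closed under quotients: this is immediate, since a quotient of a quotient of $M$ is a quotient of $M$, so if every quotient $N$ of $M$ has $\phi(N)\geq p$, the same holds for every quotient of any quotient of $M$. Next, closure under extensions: given $0\to L\to M\to M''\to 0$ with $L,M''\in\T_{\geq p}$, let $N$ be a nonzero quotient of $M$, say with kernel $K$. Form the pushout/intersection diagram: the image of $L$ in $N$ is a quotient of $L$, hence has $\phi\geq p$ (if nonzero), and $N$ modulo that image is a quotient of $M''$, hence has $\phi\geq p$ (if nonzero). If one of these two is zero, then $N$ itself is a quotient of $L$ or of $M''$ and we are done; otherwise we have a short exact sequence $0\to L'\to N\to M'''\to 0$ with $\phi(L')\geq p$ and $\phi(M''')\geq p$, and the see-saw property forces $\phi(N)$ to lie between $\phi(L')$ and $\phi(M''')$ (or to equal both), so $\phi(N)\geq \min\{\phi(L'),\phi(M''')\}\geq p$. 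Hence $M\in\T_{\geq p}$.

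The main obstacle is the existence of the canonical short exact sequence, i.e.\ showing every $M\in\A$ has a largest subobject lying in $\T_{\geq p}$; this is exactly where the Noetherian hypothesis enters. I would argue that the set of subobjects $L\subseteq M$ with $L\in\T_{\geq p}$ is closed under finite sums (if $L_1,L_2\in\T_{\geq p}$ then $L_1+L_2$ is a quotient of $L_1\oplus L_2\in\T_{\geq p}$, using closure under extensions/direct sums), and then use that $M$ is Noetherian to pick a maximal such subobject $tM$; this $tM$ then contains every $L\in\T_{\geq p}$ with $L\subseteq M$ by the sum-closure, so it is the unique largest one. It remains to check $M/tM\in\F_{<p}$, i.e.\ every nonzero subobject $L'/tM$ of $M/tM$ has $\phi(L'/tM)<p$: if not, consider the torsion part of $L'$; the preimage $L'$ would have a subobject mapping onto $L'/tM$ with $\phi\geq p$, and combined with $tM\in\T_{\geq p}$ via extension-closure one would enlarge $tM$ inside $M$, contradicting maximality. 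Once the canonical sequences exist, the torsion-free classes paired to $\T_{\geq p}$ and $\T_{>p}$ are read off as $\F_{<p}$ and $\F_{\leq p}$ respectively: the Hom-vanishing $\Hom(M,Y)=0$ for $M\in\T_{\geq p}$, $Y\in\F_{<p}$ follows because the image of any such map is simultaneously a quotient of $M$ (so $\phi\geq p$) and a subobject of $Y$ (so $\phi<p$), forcing it to be zero; and the maximality clauses in the torsion-pair definition follow from the canonical sequence together with this Hom-vanishing in the standard way.
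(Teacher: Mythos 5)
Your argument for the first four bullet points is sound and runs parallel to the paper's: closure under quotients is formal, closure under extensions is the same pullback/pushout-plus-see-saw argument the paper writes out, and where you construct the torsion subobject $tM$ by hand (sum-closure plus Noetherian maximality), the paper simply invokes the standard fact that in a Noetherian abelian category a subcategory closed under quotients and extensions is a torsion class. The Hom-vanishing $\Hom_\A(\T_{\geq p},\F_{<p})=0$ via the image being simultaneously a quotient and a subobject is also exactly the paper's argument.

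The genuine gap is in the identification of the orthogonal class, i.e.\ the claim $M/tM\in\F_{<p}$ (equivalently, that any $X$ with $\Hom_\A(X,Y)=0$ for all $Y\in\F_{<p}$ lies in $\T_{\geq p}$). Your step ``if not, consider the torsion part of $L'$\dots one would enlarge $tM$'' is circular: the torsion part is taken with respect to the very pair being constructed, and, more importantly, knowing that a subobject $U$ of $M/tM$ has $\phi(U)\geq p$ does \emph{not} give you a nonzero subobject of $M/tM$ lying in $\T_{\geq p}$ — membership in $\T_{\geq p}$ requires \emph{every} quotient of $U$ to have phase $\geq p$, which is strictly stronger than $\phi(U)\geq p$. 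To enlarge $tM$ by extension-closure you need a subobject of $M/tM$ that is actually in $\T_{\geq p}$, and producing one from the mere existence of a subobject of large phase is the nontrivial content of Rudakov's Proposition 1.9 (existence of a maximally destabilizing sub/quotient object, which is $\phi$-semistable and hence lies in $\T_{\geq p}$). This is precisely the external input the paper cites at this point; without it, or an equivalent Noetherian-induction argument establishing the existence of semistable maximally destabilizing subobjects, the ``moreover'' part of the statement is not proved.
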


\begin{proof}
We only prove that $\T_{\geq p}$ is a torsion class, since the proofs of the other bullet points are similar.
Since $\A$ is noetherian, it is enough to show that $\T_{\geq p}$ is closed under quotients and extensions.

We first show that $\T_{\geq p}$ is closed under quotients. 
Let $M \in \T_{\geq p}$ and $N$ be a quotient of $M$.
Then every quotient $N'$ of $N$ is also a quotient of $M$. 
So, $\phi(N') \geq p$ because $M \in \T_{\geq p}$. 
This shows that $\T_{\geq p}$ is closed under quotients. 

Now we show that $\T_{\geq p}$ is closed under extensions. 
Consider the short exact sequence 
$$0\rightarrow L\overset{f}\rightarrow M\overset{g}\rightarrow N\rightarrow 0$$ 
such that $L, N \in \T_{\geq p}$ and let $M'$ be a quotient of $M$.
Then we can build the following commutative diagram.
$$\xymatrix{
 0\ar[r]& L\ar[r]^f\ar[d] &M\ar[r]^g\ar[d]^{p} &N\ar[r]\ar[d] &0 \\
 0\ar[r]& \im (p f)\ar[r]^{f'}\ar[d] & M'\ar[r]^{g'}\ar[d] &\coker (f')\ar[r]\ar[d] &0 \\
  & 0 & 0 & 0 & }$$
If $\im (pf) = 0$, we have that $M'$ is isomorphic to a quotient of $N$, implying that $\phi(M')\geq p$ be cause $N \in \T_{\geq p}$.
Similarly, if $\coker (f') = 0$ we have that $\phi(M') \geq p$.

Otherwise, $\im (pf)$ and $\coker (f')$ are non-zero objects in $\A$ and then $\phi(\im (pf))$ and $\phi(\coker (f'))$ are well-defined.
Moreover, $\phi(\im (pf)) \geq p$ and $\phi(\coker (f')) \geq p$ because $\im (pf)$ and $\coker (f')$ are quotient of $L$ and $M$, respectively. 
Then, we have that $\phi(M') \geq p$ because $\phi$ is a stability function. 
Hence $\T_{\geq p}$ is a torsion class.

For the moreover part of the statement, we only prove that $(\T_{\geq p}, \F_{< p})$ is a torsion pair in $\A$.
The same proof also works for $(\T_{>p}, \F_{\leq p})$.
Consider let $f: M \to N$, where $M \in \T_{\geq p}$ and $ N \in \F_{< p}$.
Then we have that $\im f$ is a quotient of $M$ and a subobject of $N$. 
Then $\im f \cong 0$ because $\phi$ is a stability function. 
That is $\Hom_{\A} (M , N)=0$.

Now, consider $X$ such that $\Hom_\A (X,Y)=0$ for every $Y \in \F_{<p}$.
By \cite[Proposition 1.9]{Rudakov1997} there exists a quotient $M'$ of $X$ such that $M'$ is $\phi$-semistable and $\phi(M')\leq \phi(N)$ for every quotient $N$ of $X$.
Since $\Hom_\A (X,Y)=0$ for every $Y \in \F_{<p}$, we conclude that $p \leq \phi(M')$.
Hence $\phi(N) \geq p$ for every quotient $N$ of $X$.
Then $X \in \T_{\geq p}$ by definition. 
One shows that every $Y$ such that $\Hom_\A (X,Y)=0$ for every $X \in \T_{\geq p}$ belong to $\F_{> p}$ similarly.
\end{proof}

A direct consequence of Proposition~\ref{prop:stabtorsion} is the following lemma which appeared already in \cite{Baumann2013} and \cite{BSTpath} as a side commentary. 
We choose to state it clearly as a lemma since it is a key result to compare Harder-Narasimhan filtrations coming from a stability function and the chain of torsion classes it induces. 

\begin{lemma}\label{lem:stab-chain}
Let $\phi:Obj^*(\mathcal{A})\rightarrow [0,1]$ be a stability function in $\Stab(\A)$.
Then $\phi$ induces two chains of torsion classes $\eta_\phi$ and $\overline{\eta}_\phi$ in $\CT(\A)$ as follows.
$$\eta_\phi:= \{\T_s:=\T_{> s} : s\in [0,1]\}$$
$$\overline{\eta}_\phi:= \{\overline{\T}_s:=\T_{\geq s} : s\in [0,1]\}$$
\end{lemma}

\begin{proof}
We only prove that ${\phi}_\eta$ is a chain of torsion classes in $\CT(\A)$ since the proof for $\overline{\phi}_\eta$ is very similar.
It was shown in Proposition \ref{prop:stabtorsion} that $\T_s = \T_{> s}$ is a torsion class for every $s \in [0,1]$.
Suppose that $r \leq s$ and let $M \in \T_s$.
Then $\phi(N) > s$ for all quotient $N$ of $M$ by definition of $\T_s$.
In particular $\phi(N) > s \geq r$ for all quotient $N$ of $M$.
Therefore $M \in \T_r$.
Hence we have that $\eta_\phi$ is a chain of torsion classes. 

It is left to show that $\eta_\phi \in \CT(\A)$.
By hypothesis $\A$ is noetherian, then it follows immediately that $\eta_\phi$ is quasi-noetherian. 
Then, it is enough to show that $\eta_\phi$ is weakly-artinian. 
Suppose that $\eta_\phi$ is not weakly-artinian. 
Then there exists a subset $S$ of $[0,1]$ having an infinite subset $\{r_i \in S \colon r_i < r_{i+1}, i\in \mathbb{N}\}$ inducing a chain
\begin{align}\label{eq:descending chain}
&&\dots \subset t_{r_3}M \subset t_{r_2}M \subset t_{r_1}M \subset M
\end{align}
where $t_{r_{i+1}}M$ is a proper subobject of $t_{r_i}M$.
We claim that $\phi(t_{r_{i+1}}M)> \phi(t_{r_{i}}M)$. 
It follows from the definition of torsion pairs that $t_{r_{i+1}}M$ is the torsion subobject of $t_{r_{i}}M$ with respect of $\T_{>} r_{i+1}$. 
Then the canonical short exact sequence of $t_{r_{i+1}}M$ with respect to $\left(\T_{> r_{i+1}}, \F_{\leq r_{i+1}}\right)$ is the following.
$$0 \to t_{r_{i+1}}M \to t_{r_{i}}M \to \frac{t_{r_{i}}M}{t_{r_{i+1}}M} \to 0$$
By definition we have that $t_{r_{i+1}}M \in \T_{> r_{i+1}}$ and $\frac{t_{r_{i}}M}{t_{r_{i+1}}M} \in \F_{\leq r_{i+1}}$. 
Then, by construction of the torsion pair we know that $\phi\left(t_{r_{i+1}}M\right) > r_{i+1} \geq \phi\left(\frac{t_{r_{i}}M}{t_{r_{i+1}}M}\right)$.
Since $\phi$ is a stability function, it verifies the see-saw property. 
Hence we can conclude that $\phi\left(t_{r_{i+1}}M\right)  > \phi\left(t_{r_{i}}M\right)$ as claimed.
By hypothesis, $\phi$ is a weakly-artinian stability function, so the chain (\ref{eq:descending chain}) stabilises contradicting our assumption. 
Hence $\eta_\phi \in \CT(\A)$.
\end{proof}

\begin{remark}\label{rmk:notallchains}
It is important to note that not every chain of torsion classes $\eta \in \CT(\A)$ is induced by a stability function $\phi \in \Stab(\A)$. 
Indeed, it has been shown in \cite[Proposition~2.20]{BSTpath} that for every stability function $\phi\in\Stab(\A)$ and every $t \in [0,1]$ the category $\P_{\eta_\phi}(t)$ is a wide subcategory of $\A$. 
However there are examples of chains of torsion classes, such as the one appearing in Example~\ref{ex:nowide}, having categories of quasisemistable objects that are not wide.
\end{remark}

The key result that allow us to compare chains of torsion classes and stability condition is the following.

\begin{theorem}\label{thm:semistabchain}
Let $\phi:Obj^*(\mathcal{A})\rightarrow [0,1]$ be a stability function in $\Stab(\phi)$, $\eta_\phi$ be the chain of torsion classes induced by $\phi$ and $t \in [0,1]$.
Then 
$$\P_{\eta_\phi}(t) = \P_{\overline{\eta}_\phi}(t) = \{M \in \A: M \text{ is $\phi$-semistable and } \phi(M)=t\} \cup \{0\}.$$
\end{theorem}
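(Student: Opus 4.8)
The plan is to prove the set equality $\P_t = \{M \in \A: M \text{ is $\phi$-semistable and } \phi(M)=t\} \cup \{0\}$ by double inclusion, using the explicit description of $\P_t$ from Definition~\ref{def:quasisemistables} together with the fact that $\eta_\phi$ is built from the torsion classes $\T_{\geq s}$ of Proposition~\ref{prop:stabtorsion}. The first step is to record what the intersections defining $\P_t$ become in this case: since $\T_s = \T_{\geq s}$, one expects $\bigcap_{s<t}\T_s = \T_{\geq t}$ and $\bigcap_{s>t}\F_s = \F_{\leq t}$ (using that $\phi$ takes values in $[0,1]$ and monotonicity of the families in $s$, plus the Noetherian hypothesis so that the ``$>$'' and ``$\geq$'' versions agree in the limit). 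Granting this, for $t\in(0,1)$ we get $\P_t = \T_{\geq t}\cap \F_{\leq t}$, and the endpoint cases $t=0$ ($\P_0 = \F_{\leq 0}$) and $t=1$ ($\P_1 = \T_{\geq 1}$) are handled by the same bookkeeping.

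For the inclusion $\supseteq$, I would take a nonzero $\phi$-semistable $M$ with $\phi(M)=t$ and check that every quotient $N$ of $M$ has $\phi(N)\geq t$ and every subobject $L$ of $M$ has $\phi(L)\leq t$; the first follows from the see-saw property applied to $0\to L\to M\to N\to 0$ together with semistability ($\phi(L)\leq\phi(M)$ forces $\phi(N)\geq\phi(M)$ in the non-constant case, and the constant case is immediate), and the second is exactly the definition of $\phi$-semistable. Hence $M\in\T_{\geq t}\cap\F_{\leq t} = \P_t$. For the reverse inclusion $\subseteq$, take nonzero $M\in\P_t = \T_{\geq t}\cap\F_{\leq t}$. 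From $M\in\T_{\geq t}$, taking the quotient $N=M$ gives $\phi(M)\geq t$; from $M\in\F_{\leq t}$, taking the subobject $L=M$ gives $\phi(M)\leq t$, so $\phi(M)=t$. Semistability then follows: any nonzero subobject $L\subsetneq M$ satisfies $\phi(L)\leq t = \phi(M)$ because $M\in\F_{\leq t}$.

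The main obstacle I anticipate is justifying the limit identifications $\bigcap_{s<t}\T_{\geq s} = \T_{\geq t}$ and $\bigcap_{s>t}\F_{\leq s} = \F_{\leq t}$ cleanly — in particular that no object sneaks into the intersection $\bigcap_{s<t}\T_{\geq s}$ without lying in $\T_{\geq t}$. One direction is trivial from monotonicity; for the other, if $M\in\T_{\geq s}$ for all $s<t$ then every quotient $N$ of $M$ has $\phi(N)\geq s$ for all $s<t$, hence $\phi(N)\geq t$, so $M\in\T_{\geq t}$ — this is actually elementary. The $\F$-side is dual but uses a quotient-to-subobject passage and the equality $\bigcap_{s>t}\F_{\leq s}=\F_{\leq t}$ should be argued the same way via $\phi(L)\leq s$ for all $s>t$. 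Once these identifications are in place the rest is a routine unwinding of definitions, and I would present the argument with the $t\in(0,1)$ case in full and remark that the endpoint cases follow by discarding the vacuous half of the intersection. Finally, the ``in particular'' sentence about Harder--Narasimhan filtrations coinciding follows because both filtrations are characterised by the same uniqueness property (Theorem~\ref{thm:algHNfilt} on one side, Rudakov's uniqueness on the other) once the classes of quasisemistable and semistable objects of each phase are identified.
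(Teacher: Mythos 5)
Your proposal is correct and follows essentially the same route as the paper: identify $\P_t$ with $\T_{\geq t}\cap\F_{\leq t}$ and then argue both inclusions from the definition of $\phi$-semistability and the see-saw property. The only difference is cosmetic — you verify the limit identities $\bigcap_{s<t}\T_{\geq s}=\T_{\geq t}$ and $\bigcap_{s>t}\F_{\leq s}=\F_{\leq t}$ by a direct elementary argument, whereas the paper appeals to Lemma~\ref{lem:Dedekindcuts}; your version is, if anything, the more self-contained justification.
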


\begin{proof}
By definition, we have that $\P_{\eta_\phi}(t)=\bigcap\limits_{s<t} \T_s  \cap \bigcap\limits_{s>t} \F_s$.
Then Lemma \ref{lem:Dedekindcuts} implies that $\bigcap\limits_{s<t} \T_s= \T_{\geq t}$ and $\bigcap\limits_{s>t} \F_s = \F_{\leq t}$.
Is clear that $\phi(M)=t$ for all $M \in \bigcap\limits_{s<t} \T_s  \cap \bigcap\limits_{s>t} \F_s$.
Moreover, if $L$ is a subobject of $M$ then $\phi(L) \leq t$ by Theorem~\ref{thm:quasistables}.3. 
Therefore $M$ is a $\phi$-semistable object of phase $t$.

In the other direction, suppose that $M$ is a $\phi$-semistable object of phase $t$.
Then we have that $\phi(L) \leq \phi(M)=t$ for every subobject $L$ of $M$ because $M$ is $\phi$-semistable, implying that $M \in \F_{\leq t}$. 
Dually, $t = \phi(M) \leq \phi(N)$ because $M$ is $\phi$-semistable, so $M \in \T_{\geq t}$.
Therefore $M \in \T_{\geq t} \cap \F_{\leq t} = \P_{\eta_\phi}(t)$, as claimed.

To finish the proof it is enough to realise that one can replace every occurrence of $\P_{{\eta}_\phi}$ by $\P_{\overline{\eta}_\phi}$.
\end{proof}

As a consequence of Theorem \ref{thm:algHNfilt} and Theorem \ref{thm:semistabchain} we recover important results on stability conditions.

\begin{corollary}\cite[Theorem 2]{Rudakov1997}\label{cor:stabHNfilt}
Let $\phi:Obj^*(\mathcal{A})\rightarrow [0,1]$ be a stability function in $\Stab(\A)$.
Then every object $M\in \A$ admits a Harder-Narasimhan filtration.
\end{corollary}

\begin{proof}
Let $\phi: \A \to [0,1]$ be a stability function in $\Stab(\A)$.
Then Lemma~\ref{lem:stab-chain} says that $\phi$ induces two chains of torsion classes $\eta_\phi$ and $\overline{\eta}_\phi$ in $\CT(\A)$.
Then Theorem~\ref{thm:algHNfilt} implies that for every non-zero object $M$ of $\A$ there exists a filtration 
$$M_0 \subset M_1 \subset \dots \subset M_n$$
of $M$ such that:
\begin{enumerate}
\item $0 = M_0$ and $M_n=M$;
\item $M_k/M_{k-1} \in \P_{\eta_\phi}(r_k)$ for some $r_k \in [0,1]$ for all $1\leq k \leq n$;
\item $r_1 > r_2 > \dots > r_n$;
\end{enumerate}
which is unique up to isomorphism. 
It follows from Theorem~\ref{thm:semistabchain} that $M_k/M_{k-1}$ is $\phi$-semistable for each $k$.
It also implies that $\phi(M_1) > \dots > \phi(M_n/M_{n-1})$.
\end{proof}

\begin{corollary}\cite[Theorem 1]{Rudakov1997}
Let $\phi: Obj^*(\A) \to \I$ and a stability function in $\Stab(\A)$, $t \in \I$ and consider a non-zero morphism $f: M \to N$, where $M,N$ are $\phi$-semistable and $\phi(M)=\phi(N)=t$. 
Then the following hold:
\begin{enumerate}
\item $\im f$ is a $\phi$-semistable object and $\phi(\im f)=t$;
\item if $N$ is $\phi$-stable then $f$ is an epimorphism;
\item if $M$ is $\phi$-stable $f$ is a monomorphism;
\item if $M$ and $N$ are $\phi$-stable then $f$ is an isomorphism .
\item every $\phi$-stable object is a brick.
\end{enumerate}
\end{corollary}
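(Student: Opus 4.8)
The strategy is to deduce this corollary from Theorem~\ref{thm:semistabchain} and Proposition~\ref{prop:Endstables}, observing that the two results match up almost term by term once we identify the relevant subcategories. By Proposition~\ref{prop:stab-chain}, the stability function $\phi$ induces a chain of torsion classes $\eta_\phi$, which lies in $\CT(\A)$ by hypothesis (this is needed for Proposition~\ref{prop:Endstables} to apply). Theorem~\ref{thm:semistabchain} then identifies, for the phase $t$, the $\eta_\phi$-quasisemistable category $\P_t$ with $\{M \in \A : M \text{ is } \phi\text{-semistable and } \phi(M)=t\}\cup\{0\}$. So the hypotheses on $f\colon M\to N$ say exactly that $M, N \in \P_t$.

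First I would dispatch item (1): since $M,N\in\P_t$ and $f$ is non-zero, Proposition~\ref{prop:Endstables}.1 gives $\im f \in \P_t$, which by Theorem~\ref{thm:semistabchain} means precisely that $\im f$ is $\phi$-semistable with $\phi(\im f)=t$. Next, for items (2)--(4) I would check that "relatively simple in $\P_t$" and "relatively cosimple in $\P_t$" translate into the classical notions of $\phi$-stability. Concretely: if $N$ is $\phi$-stable and $L \subsetneq N$ is a non-zero subobject lying in $\P_t$, then $\phi(L)=t=\phi(N)$ by Theorem~\ref{thm:semistabchain}, contradicting $\phi$-stability of $N$ (which demands $\phi(L)<\phi(N)$ for proper non-zero subobjects); hence $N$ is relatively simple in $\P_t$, and Proposition~\ref{prop:Endstables}.2 yields that $f$ is an epimorphism. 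Dually, if $M$ is $\phi$-stable then $M$ is relatively cosimple in $\P_t$ --- here one uses the see-saw property: a proper non-zero quotient $N'$ of $M$ inside $\P_t$ would have $\phi(N')=t$, and the complementary subobject $L'=\ker(M\to N')$ would then also satisfy $\phi(L')=t$ by see-saw, again contradicting $\phi$-stability --- so Proposition~\ref{prop:Endstables}.3 gives that $f$ is a monomorphism. Item (4) is then immediate from (2) and (3), and item (5) follows from Proposition~\ref{prop:Endstables}.5 once we note that a $\phi$-stable object is both relatively simple and relatively cosimple in $\P_t$ by the arguments just given.

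The one point requiring a little care --- and the closest thing to an obstacle --- is the equivalence "$M$ $\phi$-stable $\iff$ $M$ relatively cosimple in $\P_t$", since $\phi$-stability is phrased in terms of subobjects whereas relative cosimplicity is phrased in terms of quotients. The bridge is the see-saw property applied to the short exact sequence $0\to L'\to M\to N'\to 0$: it forces that if one of $\phi(L'),\phi(N')$ equals $\phi(M)=t$ and all three objects are non-zero, then all three phases equal $t$. This is exactly what lets a hypothetical proper non-zero $\P_t$-quotient of a $\phi$-stable $M$ produce a forbidden proper non-zero subobject of phase $t$. Everything else is a direct transcription of Proposition~\ref{prop:Endstables} through the dictionary supplied by Theorem~\ref{thm:semistabchain}.
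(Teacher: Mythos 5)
Your proof is correct and follows essentially the same route as the paper: identify $\P_t$ with the $\phi$-semistable objects of phase $t$ via Theorem~\ref{thm:semistabchain}, translate $\phi$-stability into relative (co)simplicity in $\P_t$, and invoke Proposition~\ref{prop:Endstables}. In fact you are slightly more careful than the paper, which only verifies the ``relatively simple $\iff$ $\phi$-stable'' direction and is silent on relative cosimplicity (needed for items (3)--(5)); your use of the see-saw property on $0\to L'\to M\to N'\to 0$ correctly supplies that missing half of the dictionary.
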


\begin{proof}
It follows from Theorem \ref{thm:semistabchain} that 
$$\P_{\eta_\phi}(t) = \{M \in \A: M \text{ is $\phi$-semistable and } \phi(M)=t\} \cup \{0\}.$$
Therefore, an object $M \in \P_t$ is relatively simple if and only if $\phi(L) < \phi(M)$ for every subobject $L$ of $M$. 
This implies that an object $M \in \P_{\eta_\phi}(t)$ is relatively simple if and only if $M$ is $\phi$-stable.
Then the result follows directly from Lemma~\ref{lem:stab-chain} and Proposition \ref{prop:Endstables}.
\end{proof}


\section{Slicings in abelian categories}\label{sec:slicing}

The concept of slicing in a triangulated category $\mathcal{D}$ was introduced in \cite{Bridgeland2007}.
We adapt this notion to abelian categories and we show that every chain of torsion classes $\eta \in \CT(\A)$ induces a slicing of $\A$ and, moreover, that every slicing in $\A$ arises in this way.

\begin{definition}\label{def:slicing}
A \textit{slicing} $\P$ of the abelian category $\A$ consists of full additive subcategories $\P:=\{\P(r) \subseteq \A :r \in [0,1]\}$ satisfying the following axioms:
\begin{enumerate}
\item if $M \in \P(r)$ and $N \in \P(s)$ with $r > s$, then $\Hom_{\A}(M, N)=0$;

\item for each non-zero object $M$ of $\A$ there exists a filtration 
$$0=M_0 \subset M_1 \subset \dots \subset M_n=M$$ 
of $M$ and a set 
$$\{ r_i : 1 \leq i \leq n \text{ and } r_i > r_j  \text{ if $i < j$} \}\subset [0,1]$$
such that $M_i/ M_{i-1} \in \P(r_i)$ is unique up to isomorphism.
\end{enumerate}
We denote by $\P(\A)$ the set of all the slicings on an abelian category $\A$.
\end{definition}

Given a subcategory $\mathcal{B}$ of $\A$, we denote by $\mbox{Filt}(\mathcal{B})$ the subcategory of all objects in $\A$ admitting a filtration whose subquotients belong to $\mathcal{B}$.
More precisely, $M \in \mbox{Filt}(\mathcal{B})$ if and only if there exists a chain of subobjects 
$$0=M_0\subset M_1 \subset \dots \subset M_{t-1} \subset M_t = M$$
of $M$ with $M_k / M_{k-1} \in \mathcal{B}$ for all $1\leq k \leq t$.
The following is an easy consequence of the definition of slicing.

\begin{lemma}\label{lem:quotient}
Let $\P$ be a slicing and $s \in [0,1]$. 
If $X \in \P(s)$ then every non-zero quotient $Y$ of $X$ is in $\Filt \left( \bigcup_{t \geq s} \P(t)\right)$.
\end{lemma}

\begin{proof}
By hypothesis $Y$ is a non-zero object of $\A$. 
Then by Definition~\ref{def:slicing}.2 there exists a filtration 
$$0=Y_0\subset Y_1 \subset \dots \subset Y_{t-1} \subset Y_t = Y$$
such that $Y_i/Y_{i-1} \in \P(r_i)$ with $r_t < r_{t-1} < \dots < r_1$.
Since $Y_t/Y_{t-1}$ is a quotient of $Y$ and $Y$ is a quotient of $X$ we can conclude that $\Hom_\A(X,  Y_t / Y_{t-1})\neq 0$.
Then the contrapositive of Definition~\ref{def:slicing}.1 implies that $s \leq r_t$. 
Hence $Y$ is in $\Filt \left( \bigcup_{t \geq s} \P(t) \right)$.
\end{proof}

\begin{proposition}\label{prop:torsasfilt}
Let $\P$ be a slicing of $\A$.
Then for every $s \in (0,1)$ there are torsion pairs $(\overline{\T}_s, \F_s)$ and $(\T_s, \overline{\F}_s)$ where
$$\overline{\T}_s = \Filt \left( \bigcup_{t \geq s} \P(t) \right),
\T_s = \Filt \left( \bigcup_{t > s} \P(t) \right)
$$
$$
\overline{\F}_s = \Filt \left( \bigcup_{r \leq s} \P_{\eta}(r) \right),
\F_s = \Filt \left( \bigcup_{r < s} \P(r)\right).$$
\end{proposition}

\begin{proof}
We only prove that $(\T_s, \overline{\F}_s)$ is a torsion pair since the proof of the other case is identical. 
Let $\P$ be a slicing of $\A$ and $s\in(0,1)$.

We first show that $\T_s$ is closed under extensions. 
Let $0 \to L \to M \to N \to 0$ be a short exact sequence with $L,N \in \T_s$. 
Then, by assumption, $L$ and $N$ admit a filtration as follows 
$$0=L_0\subset L_1 \subset \dots \subset L_{l-1} \subset L_l = L$$
$$0=N_0\subset N_1 \subset \dots \subset N_{t-1} \subset N_t = N$$
where $L_i/L_{i-1}$ and $N_j/N_{j-1}$ belong to $\bigcup_{t \geq s} \P(t)$ for every $1\leq i \leq l$ and $1 \leq j \leq n$, respectively.
Since $N$ is a quotient of $M$, the filtration of $N$ given above lifts to a filtration 
$$L=M_{l+1}\subset M_{l+2} \subset \dots \subset M_{l+t-1} \subset M_{l+t} = M$$
of $M$ where $M_{l+j}/M_{l+j-1} \cong N_j/N_{j-1} \in \bigcup_{t \geq s} \P(t)$.
Then using the filtration of $L$ we obtain a filtration 
$$0=M_0\subset M_1 \subset \dots \subset M_{l-1} \subset M_l \subset M_{l+1} \subset \dots \subset M_{l+t-1} \subset M_{l+t} = M$$
of $M$ where we set $M_i = L_i$ for all $1 \leq i \leq l$.
By construction we have that $M_i / M_{i-1} \in \bigcup_{t \geq s} \P(t)$ for every $1 \leq i \leq n+l$. 
Hence $\Filt \left( \bigcup_{t \geq s} \P(t) \right)$ is closed under extensions.

We claim that $\T_s$ is closed under quotients.
Let $0 \to L \to M \to N \to 0$ be a short exact sequence and suppose that $M \in \T_s$.
By hypothesis there is a filtration 
$$0=M_0 \subset M_1 \subset \dots \subset M_n=M$$
of $M$ where $M_i / M_{i-1} \in \bigcup_{t > s} \P(t)$ for all $1 \leq i \leq n$.
This filtration of $M$ induces a filtration 
$$0=\frac{M_0 + L}{L}\subset \frac{M_1 + L}{L} \subset \dots\subset \frac{M_{n-1} + L}{L}  \subset \frac{M_n + L}{L} \cong N$$
of $N$ where $\frac{\frac{M_i + L}{L}}{\frac{M_{i-1} + L}{L}} \cong \frac{M_i + L}{M_{i-1}+L}$ is a quotient of $M_i / M_{i-1}$ for every $1 \leq i \leq n$.
From this filtration we obtain a filtration 
$$0=N_0 \subset N_1 \subset \dots \subset N_t=N$$
such that $N_j/N_{j-1}$ is a non-zero quotient for $M_i/M_{i-1} \in \P(r_i)$ for some $1 \leq i \leq n$. 
Then Lemma~\ref{lem:quotient} implies that $N_j/N_{j-1}\in \T_s$ for every $1 \leq j \leq t$. 
Then we can conclude that $\T_s$ is closed under quotients. 
A similar argument shows that $\overline{\F}_s$ is closed under extensions and subobjects. 

We claim that $\Hom_\A(X, Y)=0$ for all $X\in \T_s$ and $Y \in \overline{\F}_s$. 
Suppose that there exists a non-zero morphism $f \in \Hom_\A(X, Y)$ for some $X\in \T_s$ and $Y \in \overline{\F}_s$. 
Then the image $\im f$ of $f$ is both a quotient of $X$ and a subobject of $Y$. 
From the above we have that $\im f \in \T_s \cap \overline{\F}_s$.
This, in turn, imply the existence of a non-zero object $X_r \in \P(r) \cap \overline{\F}_s$ for some $r > s$. 
Then, by definition, we obtain a filtration 
$$0=(X_r)_0 \subset (X_r)_1 \subset \dots \subset (X_r)_t= X_r$$
such that every successive quotient is in $\bigcup_{z< s} \P(z)$. 
In particular we have that there exists a non-zero morphism $p : X_r \to X_r / (X_r)_{t-1}$ where $X_r / (X_r)_{t-1}$ is an object in $\P(z)$ where $z \leq s < r$, contradicting Definition~\ref{def:slicing}.1 and our claim follows.

We finish the proof by showing the existence of a canonical short exact sequence associated to every $M \in \A$.
Let $M$ be a non-zero object in $\A$.
Then by Definition~\ref{def:slicing}.2 there exists a filtration 
$$0=M_0 \subset M_1 \subset \dots \subset M_t=M$$
where $M_i / M_{i-1} \in \P(r_i)$ for every $1\leq i \leq t$ and $r_t < \dots < r_2 < r_1$. 
Set $k = \min_{1\leq i \leq t} \{r_i > s\}$. 
Then the short exact sequence 
$$ 0 \to M_k \to M \to M/M_k \to 0$$
is such that $M_k \in \T_s$ and $M/ M_k \in \overline{\F}_s$.
\end{proof}

\begin{lemma}\label{lem:chainsfrom}
Let $\P$ be a slicing of $\A$. 
Then $\P$ naturally induces two chains of torsion classes:
\begin{itemize}
    \item $\eta_\P = \left\{ \T_0 = \A, \T_s \text{ for every $s\in (0,1)$ and } \T_1 = \{0\}  \right\}$
    \item $\overline{\eta}_\P = \left\{ \T_0 = \A, \overline{\T}_s \text{ for every $s\in (0,1)$ and } \T_1 = \{0\}  \right\}$
\end{itemize}
where $\eta_\P, \overline{\eta}_\P \in \CT(\A)$.
Moreover $\bigcap\limits_{r<s} \T_r = \bigcap\limits_{r<s} \overline{\T}_r = \overline{\T}_s$ and $\bigcap\limits_{r>s} \overline{\F}_r = \bigcap\limits_{r>s} \F_r = \overline{\F}_s$ for every $s \in (0,1)$.
\end{lemma}

\begin{proof}
Take $r < s$.
The fact that $\T_s$, $\T_r$, $\overline{\T}_s$ and $\overline{\T}_r$ are torsion classes has been shown in Proposition~\ref{prop:torsasfilt}.
Then it is obvious that $\T_s \subseteq \T_r$ and $\overline{\T}_s \subseteq \overline{\T}_r$. 
Therefore $\eta_\P$ and $\overline{\eta}_\P$ as defined in the statement is a chain of torsion classes.
The fact that $\eta_\P, \overline{\eta}_\P \in \CT(\A)$ follows from the fact that the filtration in Definition~\ref{def:slicing} is finite for every object $M\in \A$.

Now we prove the moreover part of the statement.
We only show that 
$\overline{\T}_s = \bigcap_{r<s} \overline{\T}_r$ since the other cases are similar.
The fact that $\overline{\T}_s \subseteq \bigcap\limits_{s>t} \overline{\T}_t$ is a direct consequence of the that that $\eta_\P$ is a chain of torsion classes.
To prove the other direction note that, by definition of $\overline{\T}_s$, every object in the intersection of $\P(r)$ and $\overline{\T}_s$ is isomorphic to the zero object if $r < s$.
Now, let $M \in \bigcap\limits_{s>t} \overline{\T}_t$ be a non-zero object and consider the filtration 
$$0=M_0 \subset M_1 \subset \dots \subset M_n=M$$
of $M$ given by Definition \ref{def:slicing}.
Then, by definition, $M_n / M_{n-1}$ is a non-zero object in $\P(t)$ for some $t \in [0,1]$. 
Also $M_n / M_{n-1} \in \bigcap\limits_{s>t} \overline{\T}_t$ because $\bigcap\limits_{s>t} \overline{\T}_t$ is a torsion class. 
Therefore, the remark we did at the beginning of this paragraph implies that $M_n / M_{n-1} \in \P(t)$ where $t \geq s$. 
In particular, Definition \ref{def:slicing} implies that 
$$M \in \mbox{Filt} \left(\bigcup_{t \geq s} \P(t)\right)=\overline{\T}_s.$$
\end{proof}

Recall that for every chain of torsion classes $\eta \in \CT(\A)$ we define the set $\P_\eta$ of subcategories of $\A$ to be $\P_\eta = \{\P_{\eta}(t) : t \in [0,1]\}$.

\begin{theorem}\label{thm:eqslicingtors}
Let $\A$ be a abelian category. 
Then every chain of torsion classes $\eta$ in $\CT(\A)$ induces a slicing $\P_\eta$ in $\P(\A)$ and every slicing arises this way.
\end{theorem}

\begin{proof}
Let $\eta \in \CT(\A)$.
The fact that $\P_\eta$ is a slicing follows directly from Theorem \ref{thm:algHNfilt}  and Theorem \ref{thm:quasistables}.4.

Conversely, if $\P$ is a slicing of $\A$, $\P$ induces a chain of torsion classes $\eta_\P \in \CT(\A)$ by Lemma \ref{lem:chainsfrom}.
We claim that $\P_{\eta_\P}$ coincides with $\P$.
Then we need to calculate $\P_{\eta_\P}(s) \in \P_{\eta_\P}$ for every $s \in [0,1]$.
By definition we have that 
$$\P_{\eta_\P}(s)= \bigcap\limits_{s>t} \T_t\cap \bigcap\limits_{r>s}
\F_r.$$
Now, we apply Lemma \ref{lem:chainsfrom} to get that 
$$\bigcap\limits_{s>t} \T_t\cap \bigcap\limits_{r>s}
\F_r = \mbox{Filt} \left(\bigcup_{t \geq s} \P(t)\right) \cap \mbox{Filt} \left(\bigcup_{t \leq s} \P(t)\right)$$
Clearly, 
$$ \P(s) \subseteq \mbox{Filt} \left(\bigcup_{t \geq s} \P(t)\right) \cap \mbox{Filt} \left(\bigcup_{t \leq s} \P(t)\right):=\P_{\eta_\P}(s)$$
Let $M \in \P_{\eta_\P}(s)$ and consider its filtration 
$$0=M_0 \subset M_1 \subset \dots \subset M_n=M$$ 
given by Definition \ref{def:slicing}.2.
Since $M \in \mbox{Filt} \left(\bigcup\limits_{t \geq s} \P(t)\right)$ then
$M_k/M_{k-1} \in \P(r_k)$
where $r_k \geq s$ for every $k$.
Likewise, since $M \in \mbox{Filt} \left(\bigcup\limits_{t \leq s} \P(t) \right)$ we have that $M_k/M_{k-1} \in \P(r_k)$ where $r_k \leq s$ for every $k$.
Hence $M \in \mbox{Filt} (\P(s)) =\P(s)$. 
\end{proof}


\section{Maximal green sequences}\label{sec:MGS}

In this section we consider a particular type of chains of torsion classes, the so-called \textit{maximal green sequences}.
The concept of maximal green sequence was first introduced in \cite{Keller2011b} in the context of cluster algebras and its axiomatic study started in \cite{Brustle2014}. 
With the introduction of $\tau$-tilting theory \cite{AIR}, it has been shown that the combinatorics of cluster algebras were encoded in the homological algebra of $\mod A$, the category of finitely generated (right) modules over a finite-dimensional algebra $A$ over a field $k$. 
This allowed the formal study of maximal green sequences in any abelian category \cite{Ingalls2009,BSTpath}.
For a complete survey on the subject, we refer the reader to \cite{DK2019}.

\begin{definition}\label{def:MGS}
A \textit{maximal green sequence} in an abelian category $\A$ is a finite sequence of torsion classes 
$$ 0=\T_t\subsetneq \T_{t-1} \subsetneq \dots \subsetneq \T_{1}\subsetneq \T_0=\A$$
such that for all $i\in\{1, 2, \dots, t\}$, the existence of a torsion class $\T$ satisfying $\mathcal{T}_{i+1}\subseteq\mathcal{T}\subseteq\mathcal{T}_{i}$ implies $\mathcal{T}=\mathcal{T}_i$ or $\mathcal{T}=\mathcal{T}_{i+1}$.
\end{definition}

Since every maximal green sequence is in $\CT(\A)$, we obtain the following corollary as a particular case of Theorem~\ref{thm:algHNfilt}, which is a generalisation of \cite[Theorem 5.13]{Igusa2017}. 

\begin{corollary}\label{cor:MGS}
Let $\A$ be an abelian length category and $\eta$ be a maximal green sequence in $\A$. 
Then $\eta$ induces a Harder-Narasimhan filtration for every non-zero object $M$ of $\A$.
\end{corollary}

\begin{proof}
Let $\eta$ be a maximal green sequence as defined in Definition \ref{def:MGS}. 
Hence $\eta$ can be seen as a chain of torsion classes defined as $\T_x = \T_i$ if $x \in \left[\frac{i}{t}, \frac{i+1}{t} \right)$ for all $i \in \{1, \dots, t\}$.
Then $\eta\in\CT(\A)$ by Remark~\ref{rmk:etainCT(A)}.
The result follows from Theorem~\ref{thm:algHNfilt}.
\end{proof}

The previous result is valid for every abelian category. 
However, if we suppose that $\A=\mod A$ we are able to give a better description of the Harder-Narasimhan filtration induced by a maximal green sequence using $\tau$-tilting theory.
In this section $\tau$ denotes the Auslander-Reiten translation in $\mod A$.

\begin{definition}\cite[Definition 0.1 and 0.3]{AIR}\label{def:list}
Let $A$ be an algebra, $M$ an $A$-module and $P$ a projective $A$-module. The pair $(M,P)$ is said \textit{$\tau$-rigid} if:
	\begin{itemize}
		\item $\Hom_{A}(M,\tau M)=0$;
		\item $\Hom_{A}(P,M)=0$.
	\end{itemize}
Moreover, we say that $(M,P)$ is \textit{$\tau$-tilting} (or \textit{almost $\tau$-tilting}) if $|M|+|P|=n$ (or $|M|+|P|=n-1$, respectively), where $|M|$ denotes the number of non-isomorphic indecomposable direct summands of $M$.
\end{definition}

The implications of $\tau$-tilting theory in representation theory are multiple, some of them yet to be understood. 
For instance, it was already mentioned before the relation of this theory with cluster algebras.
From a more representation theoretic point of view, $\tau$-tilting theory has the fundamental property of describing all functorially finite torsion classes. 
An overview of this rich subject can be found in \cite{TreffingerSurvey}.
The exact definition of functorial finiteness is not necessary for the results of this paper and it will be skipped.

\begin{theorem}\cite[Theorem 2.7]{AIR}\cite[Theorem 5.10]{Auslander1981}
There is a well-defined function $\Phi: \mathrm{s\tau\text{-}rig} A \to \mathrm{f\text{-}tors}$ from $\tau$-rigid pairs to functorially finite torsion classes given by 
$$\Phi(M,P)=\Fac M:=\{X \in\mod A : M^{r} \to X \to 0 \text{ for some $r\in\mathbb{N}$}\}.$$
Moreover, $\Phi$ is a bijection if we restrict it to $\tau$-tilting pairs.
\end{theorem}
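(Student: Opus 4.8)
The final statement is the theorem attributed to \cite[Theorem 2.7]{AIR} and \cite[Theorem 5.10]{Auslander1981}, asserting that $\Phi(M,P)=\Fac M$ gives a well-defined function from $\tau$-rigid pairs to functorially finite torsion classes, which restricts to a bijection on $\tau$-tilting pairs. My plan is to split the proof into three parts: (i) $\Fac M$ is a torsion class and is functorially finite, so $\Phi$ is well-defined; (ii) for a $\tau$-rigid pair the construction interacts correctly with the projective summand $P$; and (iii) the restriction to $\tau$-tilting pairs is a bijection, by exhibiting an explicit inverse.

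For part (i), I would first recall that for any module $M$, the subcategory $\Fac M$ is closed under quotients and finite direct sums essentially by definition. The content is that $\tau$-rigidity of $(M,P)$ (in particular $\Hom_A(M,\tau M)=0$) forces $\Fac M$ to be closed under extensions: this is where I would invoke the classical Auslander--Smal\o\ argument, using the Auslander--Reiten formula $\overline{\Hom}_A(X,\tau Y)\cong D\Ext^1_A(Y,X)$ together with a snake-lemma / pullback diagram to show that if $0\to X\to E\to Y\to 0$ with $X,Y\in\Fac M$ then $E\in\Fac M$. Functorial finiteness of $\Fac M$ is immediate on the covariantly finite side (the map $M^n\to X$ realizes the approximation), and on the contravariantly finite side it follows from the finiteness of $\mod A$ and a standard argument that $\add M$ being functorially finite propagates to $\Fac M$; this is precisely \cite[Theorem 5.10]{Auslander1981}, which I would simply cite.

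For the bijectivity in part (iii), the strategy is to build the inverse map explicitly. Given a functorially finite torsion class $\T$, there is a (basic) Ext-projective object $M(\T)$ — the direct sum of the indecomposable Ext-projectives in $\T$, obtainable from the canonical exact sequence associated to the torsion pair — and one sets $P(\T)$ to be the maximal projective with $\Hom_A(P(\T),\T)=0$, equivalently $\Hom_A(P(\T),M(\T))=0$. One then checks that $(M(\T),P(\T))$ is $\tau$-rigid with $|M(\T)|+|P(\T)|=n$, i.e. $\tau$-tilting, using the characterization of Ext-projectives via $\tau$ (an indecomposable $X\in\T$ is Ext-projective in $\T=\Fac M$ iff $\Hom_A(X,\tau M)=0$) and the Bongartz-type completion argument that guarantees the count is exactly $n$. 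The two composites $\Phi\circ\Psi$ and $\Psi\circ\Phi$ are then identities: $\Fac M(\T)=\T$ because $M(\T)$ generates $\T$ (every object of $\T$ is a quotient of a sum of copies of the Ext-projective generator), and $M(\Fac M)\cong M$ for a $\tau$-tilting pair because $M$ is already the Ext-projective generator of $\Fac M$, with the projective parts matching by their common characterization.

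The main obstacle I anticipate is the count $|M(\T)|+|P(\T)|=n$ in the $\tau$-tilting direction — that is, proving surjectivity of the restricted map and that the preimage of a functorially finite torsion class is genuinely $\tau$-tilting rather than merely $\tau$-rigid. This requires the Bongartz completion: any $\tau$-rigid pair embeds into a $\tau$-tilting pair, and a maximal $\tau$-rigid pair inside a fixed torsion class is forced to have exactly $n$ summands. Verifying that the Ext-projectives of $\Fac M$ are exactly the summands of $M$ (no more, no fewer) when $(M,P)$ is $\tau$-tilting, and that $\tau$-rigid pairs with strictly fewer than $n$ summands generate torsion classes that are still functorially finite but not "saturated", is the delicate bookkeeping. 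Since this is a cited result I would present the argument in outline and refer to \cite{AIR} and \cite{Auslander1981} for the full technical details, emphasizing only the points needed downstream: that $\Fac M$ is a functorially finite torsion class, and that functorially finite torsion classes are precisely those of the form $\Fac M$ for a $\tau$-tilting pair.
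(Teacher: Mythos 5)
The paper offers no proof of this statement: it is quoted verbatim as a background result with citations to \cite{AIR} and \cite{Auslander1981}, so there is nothing internal to compare against. Your outline correctly reproduces the standard argument from those sources (closure of $\Fac M$ under extensions via $\Hom_A(M,\tau M)=0$ and the Auslander--Reiten formula; the inverse map sending a functorially finite torsion class to its Ext-projective generator together with the maximal projective $P$ with $\Hom_A(P,\T)=0$; Bongartz completion for the count $|M|+|P|=n$). One small correction: you have the two halves of functorial finiteness swapped. The map from the trace of $M$ in $X$ (the image of the evaluation $M^n\to X$) gives the \emph{right} $\Fac M$-approximation, i.e.\ contravariant finiteness, which is the easy side; it is \emph{covariant} finiteness (left approximations) that is the nontrivial content of \cite[Theorem 5.10]{Auslander1981}. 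Since you defer to the cited references for the technical details anyway, this does not affect the soundness of the outline.
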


\begin{theorem}\label{thm:MGSforAlg}
Let $A$ be finite-dimensional algebra over a field $k$ and let 
$$\eta:= \qquad 0=\T_t\subsetneq \T_{t-1} \subsetneq \dots \subsetneq \T_{1}\subsetneq \T_0=\mod A$$
be a maximal green sequence in $\mod A$. 
Then every object $M\in \A$ admits a Harder-Narasimhan filtration by bricks.
That is a filtration 
$$M_0 \subset M_1 \subset \dots \subset M_n$$
and a set $\mathcal{S}_\eta=\{B_1, \dots, B_t\}$ of bricks in $\mod A$ such that the following conditions are satisfied:
\begin{enumerate}
\item $0 = M_0$ and $M_n=M$;
\item $M_k/M_{k-1} \in \Filt(B_{i_k})$ for some $i_k \in \{1, \dots, t\}$ for all $1\leq k \leq n$;
\item the indexes $i_k$ defined in the previous item are such that $i_k >i_{k'}$ if and only if $k<k'$.
 \item $\emph{Hom}_A (B_k, B_{k'})=0$ if $k<k'$.
\end{enumerate}
Moreover this filtration is unique up to isomorphism.
\end{theorem}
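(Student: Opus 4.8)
The plan is to bootstrap Theorem~\ref{thm:MGSforAlg} from Corollary~\ref{cor:MGS} by refining each semistable layer $\P_{r_k}$ into a filtration by a single brick. First I would invoke Corollary~\ref{cor:MGS} to obtain, for the maximal green sequence viewed as a chain of torsion classes (via the identification $\T_x=\T_i$ for $x\in[i/t,(i+1)/t)$), a Harder--Narasimhan filtration $M_0\subset M_1\subset\dots\subset M_n$ with $M_k/M_{k-1}\in\P_{r_k}$ and $r_1>\dots>r_n$. The content of the theorem beyond Corollary~\ref{cor:MGS} is: (a) for each $i$, the subcategory $\P_i$ equals $\mathrm{Filt}(B_i)$ for a single brick $B_i$; (b) $\mathrm{Hom}_A(B_k,B_{k'})=0$ when $k<k'$; and (c) splicing the $B_i$-filtrations of the layers into the outer filtration yields the desired filtration by bricks, still unique up to isomorphism.

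The heart of the argument is step (a). Since the maximal green sequence has covering pairs $\T_{i+1}\subsetneq\T_i$, by \cite{AIR} (or \cite{BSTpath,DK2019}) each such cover corresponds to a minimal inclusion of torsion classes, which is controlled by a unique brick $B_i$ --- concretely $\T_i$ is the smallest torsion class containing $\T_{i+1}$ and $B_i$, and $B_i$ is the unique (up to iso) brick lying in $\T_i\cap\F_{i+1}$. Here $\F_{i+1}$ is the torsion-free class paired with $\T_{i+1}$, and a direct check with Definition~\ref{def:quasisemistables} gives $\P_i=\bigcap_{s<i/t}\T_s\cap\bigcap_{s>i/t}\F_s=\T_i\cap\F_{i+1}$, precisely the ``minimal extending module'' category of the cover $\T_{i+1}\subsetneq\T_i$. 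One then shows $\P_i=\mathrm{Filt}(B_i)$: the inclusion $\mathrm{Filt}(B_i)\subseteq\P_i$ follows because $\P_i$ is closed under extensions by Theorem~\ref{thm:quasistables}.5 and contains $B_i$; the reverse inclusion follows from the characterisation of a brick-interval, i.e.\ every object of $\T_i\cap\F_{i+1}$ is filtered by $B_i$ --- this is exactly the statement that a cover in the lattice of torsion classes is a ``brick labelling'' in the sense of \cite{DK2019}. For a module-finite algebra this uses that $\mod A$ is a length category, so the relevant Jordan--H\"older-type argument on $\T_i$-torsion / $\F_{i+1}$-torsionfree truncations terminates.

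For step (b), suppose $k<k'$, so $r_k>r_{k'}$, meaning the brick $B_k$ sits in a later (smaller) torsion class $\T_{r_k}$ while $B_{k'}$ sits in $\T_{r_{k'}}$ with $\T_{r_k}\subsetneq\T_{r_{k'}}$; more precisely $B_{k'}\in\P_{r_{k'}}=\T_{r_{k'}}\cap\F_{r_{k'}+1}$ and $B_k\in\T_{r_k}\subseteq\T_{r_{k'}+1}$. Since $B_k$ lies in $\T_{r_{k'}+1}$ and $B_{k'}$ lies in $\F_{r_{k'}+1}$, and $(\T_{r_{k'}+1},\F_{r_{k'}+1})$ is a torsion pair, we get $\mathrm{Hom}_A(B_k,B_{k'})=0$ directly. (Equivalently, this is a special case of Theorem~\ref{thm:quasistables}.4 since $B_k\in\P_{r_k}$, $B_{k'}\in\P_{r_{k'}}$ and $r_{k'}<r_k$.) For step (c), I would refine the filtration: given $M_{k-1}\subset M_k$ with $M_k/M_{k-1}\in\P_{r_k}=\mathrm{Filt}(B_{r_k})$, pull back a $B_{r_k}$-filtration of $M_k/M_{k-1}$ along $M_k\twoheadrightarrow M_k/M_{k-1}$ to interpolate subobjects between $M_{k-1}$ and $M_k$ with successive quotients $\cong B_{r_k}$; concatenating over all $k$ gives the claimed filtration by bricks. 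Uniqueness up to isomorphism follows from the uniqueness in Corollary~\ref{cor:MGS} (the layers $M_k/M_{k-1}$ and the indices $r_k$ are determined) together with Jordan--H\"older uniqueness of the $B_i$-filtration length of each layer (the multiplicity of $B_i$ as a composition-type factor is an invariant of the object in the length category $\mathrm{Filt}(B_i)$).

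The main obstacle I expect is making step (a) airtight, i.e.\ proving that $\P_i=\T_i\cap\F_{i+1}$ really is the filtration closure of a single brick rather than merely containing it. This is the place where one must genuinely use that $\T_{i+1}\subsetneq\T_i$ is a \emph{cover} in the lattice of torsion classes (not just a strict inclusion) and invoke the brick-labelling theory of \cite{AIR,DK2019}; in a general abelian category this can fail, which is exactly why the hypothesis ``$A$ finite dimensional over an algebraically closed field'' (hence $\mod A$ a length category with the brick-labelling property) is needed here. A secondary, more bookkeeping-style obstacle is phrasing the uniqueness statement precisely, since the brick filtration itself is only unique up to isomorphism of the \emph{associated graded}, not as a flag --- but this is handled by the standard Jordan--H\"older/Schreier refinement argument inside the exact subcategory $\mathrm{Filt}(B_i)$.
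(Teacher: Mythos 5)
Your proposal is correct and follows essentially the same route as the paper: both reduce the theorem to Corollary~\ref{cor:MGS} together with the external fact that for a cover $\T_{i+1}\subsetneq\T_i$ of torsion classes over a finite-dimensional algebra the category $\P_i=\T_i\cap\F_{i+1}$ is $\mathrm{Filt}(B_i)$ for a single brick $B_i$ (the paper cites this as \cite[Proposition 3.21, Theorem 3.14]{BSTw&c} after noting via \cite{DIJ} that the $\T_i$ are functorially finite, which is the same brick-labelling theory you invoke), and both obtain the Hom-vanishing from Theorem~\ref{thm:quasistables}.4. Your extra step (c) refining each layer into successive quotients literally isomorphic to $B_{r_k}$ is not needed, since the statement only asks $M_k/M_{k-1}\in\mathrm{Filt}(B_{r_k})$, but it is harmless.
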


\begin{proof}
We want to show that the Harder-Narasimhan filtration of $M$ given in Corollary \ref{cor:MGS} has all the characteristics named in the statement.
It follows from \cite[Theorem 3.1]{DIJ} that every torsion class $\T_i$ in $\eta$ is generated by a $\tau$-tilting pair $(M_i,P_i)$, that is $\T_i= \Phi(M_i,P_i)= \Fac M_i$ for every $i$.
Also, it follows from \cite[Corollary~3.21]{BSTw&c} that $\P_i$ is a wide subcategory.
Moreover, \cite[Theorem 3.14]{BSTw&c} implies that every object $M$ in $\P_\eta(i)$ is filtered by a single brick $B_i \in \P_\eta(i)$.
Therefore is enough to define $\mathcal{S}_\eta$ as 
$$\mathcal{S}_\eta := \left\{B_i : \P_\eta(i)=\Filt(B_i) \text{ for all $1 \leq i \leq t$} \right\}$$
to show 1., 2. and 3.
Finally 4. follows directly from Theorem \ref{thm:quasistables}.4.
\end{proof}

It is well-known that the Grothendick group $K_0(A)$ of an algebra $A$ is isomorphic to $\mathbb{Z}^n$, for some natural number $n$, where the classes $[S(i)]$ of the simple modules $S(i)$ form a basis of $K_0(A)$.
It was recently shown \cite{Treffinger2019, SchrollTreffinger} that the classes $[B]$ in $K_0(A)$ of certain bricks $B$ in the module category are in bijection with the so-called \textit{$c$-vectors} of the algebra, a notion first introduced in \cite{Fomin2003} for cluster algebras and later adapted to module categories in \cite{Fu2017}. 
Based on the results of \cite{Treffinger2019, SchrollTreffinger}, Theorem \ref{thm:MGSforAlg} can be reformulated as follows, generalising \cite[Lemma 2.4]{Garver2018}.

\begin{corollary}
Let $A$ be an algebra, then every maximal green sequence 
$$\eta:= \qquad 0=\T_t\subsetneq \T_{t-1} \subsetneq \dots \subsetneq \T_{1}\subsetneq \T_0=\mod A$$ 
is determined by an ordered set $\{c_1, \dots, c_t\}$ of (positive) $c$-vectors.
\end{corollary}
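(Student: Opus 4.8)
The plan is to unwind the chain of results that precede this corollary and then invoke the brick–$c$-vector dictionary of \cite{Treffinger2019}. First I would note that by Theorem~\ref{thm:MGSforAlg}, the maximal green sequence $\eta$ determines a set $\mathcal{S}_\eta = \{B_1, \dots, B_t\}$ of bricks, where $B_i$ is the unique brick (up to isomorphism) with $\P_i = \mathrm{Filt}(B_i)$ for each $1 \leq i \leq t$. Since the filtration of the previous theorem is unique up to isomorphism and each $\P_i$ is the category of $\eta$-quasisemistable objects of phase $i/t$ (which is recovered intrinsically from the chain $\eta$ via Definition~\ref{def:quasisemistables}), the assignment $\eta \mapsto \mathcal{S}_\eta$ is well defined. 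Conversely, each successive inclusion $\T_{i+1} \subsetneq \T_i$ in a maximal green sequence is, by the $\tau$-tilting machinery quoted before Theorem~\ref{thm:MGSforAlg} (together with \cite[Proposition 3.21, Theorem 3.14]{BSTw&c}), a ``minimal'' inclusion whose associated wide subcategory $\P_i$ is generated as a filtration by a single brick $B_i$; and the torsion classes $\T_i$ can be reconstructed from the ordered list of bricks by $\T_i = \mathrm{Filt}\left(\bigcup_{j \leq i} \P_j\right) = \mathrm{Filt}\left(\bigcup_{j\leq i} \mathrm{Filt}(B_j)\right)$, as in Lemma~\ref{lem:torsasfilt}. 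Thus the ordered set of bricks determines $\eta$ and vice versa.

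Next I would translate this ordered set of bricks into the language of $c$-vectors. Here I would simply cite \cite{Treffinger2019}: the classes $[B_i] \in K_0(A)$ of the bricks appearing in a maximal green sequence are exactly the $c$-vectors associated to that sequence, and the correspondence $B_i \leftrightarrow [B_i] = c_i$ is a bijection. Since a brick in $\mod A$ is determined up to isomorphism by its class in $K_0(A)$ when that class is a $c$-vector (again by \cite{Treffinger2019}, building on \cite{Fu2017}), passing from $\mathcal{S}_\eta = \{B_1, \dots, B_t\}$ to the ordered set $\{c_1, \dots, c_t\}$ of $c$-vectors loses no information. Combining the two directions — $\eta$ determines $\mathcal{S}_\eta$ determines $\{c_1, \dots, c_t\}$, and conversely $\{c_1, \dots, c_t\}$ determines $\mathcal{S}_\eta$ determines the torsion classes $\T_i$ hence $\eta$ — gives the claimed bijective dependence.

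The main obstacle, I expect, is not any one step of the argument but rather making precise the word ``determined'': one must be careful to record that the \emph{order} of the bricks (equivalently the order of the $c$-vectors, dictated by the phases $r_k$ in Theorem~\ref{thm:MGSforAlg}) is part of the data, since the same underlying set of bricks in a different order would correspond to a different maximal green sequence (or to none at all). The reconstruction direction also relies in an essential way on the finite-dimensionality of $A$ and on the $\tau$-tilting results quoted above — in particular \cite[Theorem 3.1]{DIJ} guaranteeing that each $\T_i$ is functorially finite and \cite[Theorem 3.14]{BSTw&c} guaranteeing single-brick filtration of each $\P_i$ — so the corollary is genuinely a statement about module categories rather than general abelian categories, and I would flag that dependence explicitly. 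Beyond this bookkeeping, the proof is a direct concatenation of Theorem~\ref{thm:MGSforAlg} with the results of \cite{Treffinger2019}, so it should be short.
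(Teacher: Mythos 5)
Your forward direction---from $\eta$ to the ordered set of bricks $\mathcal{S}_\eta$ via Theorem~\ref{thm:MGSforAlg}, and from bricks to their classes in $K_0(A)$ via \cite{Treffinger2019}---is exactly the paper's. The gap is in your converse. You assert that a brick in $\mod A$ is determined up to isomorphism by its class in $K_0(A)$ whenever that class is a $c$-vector, and you use this to conclude that passing from $\mathcal{S}_\eta$ to $\{c_1,\dots,c_t\}$ loses no information. That assertion is false in general, and neither \cite{Treffinger2019} nor \cite{Fu2017} claims it. For the preprojective algebra of type $A_2$, the two indecomposable projective modules are non-isomorphic bricks with the same class $(1,1)$ in $K_0(A)$, and $(1,1)$ is a $c$-vector realised by both: one of them appears as a brick label in each of the two maximal green sequences of that algebra. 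So there is no global dictionary from $c$-vectors back to bricks, and your reconstruction of $\T_i$ from the $c$-vectors alone does not go through as stated. (A smaller bookkeeping issue: your formula $\T_i=\mathrm{Filt}\bigl(\bigcup_{j\le i}\P_j\bigr)$ has the inequality reversed relative to the paper's indexing, where $\T_0=\mod A$ is the largest class.)

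The corollary survives because ``determined by'' only requires injectivity of $\eta\mapsto(c_1,\dots,c_t)$ as an \emph{ordered} tuple, and this is how the paper argues: given two distinct maximal green sequences $\eta,\eta'$, take the first index $i$ with $\T_i\neq\T'_i$; then $B_i$ and $B'_i$ label two \emph{distinct} covers below the \emph{same} torsion class $\T_{i-1}=\T'_{i-1}$, and in that local situation \cite[Theorem 3.5]{Treffinger2019} does guarantee $[B_i]\neq[B'_i]$ (the $c$-vectors attached to a fixed $\tau$-tilting pair are pairwise distinct). In the $A_2$ example above this is visible: the two maximal green sequences share the $c$-vector $(1,1)$ but their ordered sequences $((1,0),(1,1),(0,1))$ and $((0,1),(1,1),(1,0))$ differ already at the first entry. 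Your argument can be repaired in the same spirit---reconstruct $\eta$ inductively, using at step $i$ only that $c_i$ singles out one cover of the already-known $\T_{i-1}$---but the global claim that a $c$-vector determines its brick must be dropped.
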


\begin{proof}
It follows from Theorem \ref{thm:MGSforAlg} that every maximal green sequence $\eta$ induces a ordered set $\mathcal{S}_\eta$ of bricks in $\mod A$.
Then, consider the ordered set of vectors $\{[B_i] : B_i \in \mathcal{S}_\eta\}$ with the order induced by the order of $\mathcal{S}_\eta$.
It follows from \cite[Theorem 3.5]{Treffinger2019} that $\{[B_i] : B_i \in \mathcal{S}_\eta\}$ is a ordered set of (positive) $c$-vectors.

Now, suppose that we have two different maximal green sequences 
$$\eta:= \qquad 0=\T_t\subsetneq \T_{t-1} \subsetneq \dots \subsetneq \T_{1}\subsetneq \T_0=\mod A$$ 
and 
$$\eta':= \qquad 0=\T'_s\subsetneq \T'_{s-1} \subsetneq \dots \subsetneq \T'_{1}\subsetneq \T'_0=\mod A.$$ 
Then there is a first $i$ such that  $1 \leq i \leq t$, $1 \leq i \leq s$ and $\T_i \neq \T'_i$.
So, we have a brick $B_i \in \mathcal{S}_\eta$ and $B'_i \in \mathcal{S}_\eta'$ by Theorem \ref{thm:MGSforAlg}.
Moreover \cite[Theorem 3.5]{Treffinger2019} implies that $[B_i]\neq [B'_i]$.
Therefore every maximal green sequence is determined by their ordered set $\{c_1, \dots, c_t\}$ of $c$-vectors.
\end{proof}


\section{Hall algebras and wall-crossing formulas}\label{sec:wallcrossing}

In this section we consider $\A$ to be an essentially small abelian length category such that $\Hom_\A(X,Y)$ is a finite set for every pair of objects $X, Y$ of $\A$.
An example of such categories is the module category of a finite dimensional algebra over a finite field $k$.
In this section, we work with the completed Hall algebra $\wH(\A)$ of $\A$, whose elements are the formal series 
$$\sum_{{[M]\in\operatorname{Iso}(\A)}} a_M [M]$$
where $\operatorname{Iso}(\A)$ is the set of isomorphism classes of objects in $\A$, $[M]\in\operatorname{Iso}(\A)$ denotes the isomorphism class of $M\in \A$, and $a_{M}$ is an element of $k$.
The product in $\wH(\A)$ of two elements is determined by the product on its basis elements 
$$[L] \cdot [N]:= \sum c_{L N}^M [M]$$
where $c_{LN}^M$ is the number of subobjects $L'$ of $M$ which are isomorphic to $L$ such that $M / L'$ is isomorphic to $N$.
In other words,  $c_{LN}^M$ counts the number of ways we can write $M$ as an extension of $N$ by $L$, up to isomorphism.
The algebra $\wH(\A)$ is associative, noncommutative and with 
$$1_{\wH(\A)} = [0],$$
the isomorphism class of the zero object of $\A$, as its unit.
In $\wH(\A)$ there is a distinguished object $e_\A$, namely
$$e_{\A} := \sum_{[M]\in\operatorname{Iso}(\A)} [M].$$

Two different stability conditions on $\A$ can give rise to quite different Harder-Narasimhan filtrations for an arbitrary object $M$ of $\A$.
Despite this, it was shown in \cite{Reineke2003} that Harder-Narasimhan filtrations induced by every stability condition $\phi$ give rise to a factorisation of the element $e_{\A}$. 
Then, the equality between different factorisations of $e_\A$ induced by stability conditions are known under the name of \textit{wall-crossing formulas}. 

In this section we combine Reineke's ideas \cite{Reineke2003} with Theorem \ref{thm:algHNfilt} to show that wall-crossing formulas can be deduced directly from chains of torsion classes in $\A$, bypassing the construction for the suitable stability condition.
This section follows closely the exposition made in \cite[Section 4]{Bridgeland2018} for finitary Hall algebras, but the argument also works for more general Hall algebras.

Given a subcategory $\X$ of $\A$ we define the element $e_\X$ of $\wH(\A)$ associated to $\X$ to be 
$$e_\X := \sum_{[M]\in\operatorname{Iso}(\X)} [M]$$
Let $S$ be a totally ordered set and let $\{\X_s \colon 0\in\X_s\subseteq \A \text{ and } s \in S\}$ be a set of full subcategories of $\A$ indexed by $S$.
Immediately, we have a family $\{ e_{\X_s} \in \wH(\A) : s \in S\}$ of elements of $\wH(\A)$ indexed by $S$.
We write $\prod\limits_{s\in S}\limits^{\leftarrow} e_{\X_s}$ as a short hand of all the possible finite products of the classes of non-zero objects $M_{s_i} \in \X_{s_i}$ with decreasing index $s$.
This can be condensed in the following formula. 
$$\prod^{\leftarrow}_{s\in S} e_{\X_s} := \sum_{j \in \mathbb{N}} \left(\sum_{\substack{s_1 > \dots > s_j \\ [M_t]\in\operatorname{Iso}(\X_t)}} [M_{s_1}][M_{s_2}]\dots[M_{s_j}]\right)$$
Now we state the main result of this section.

\begin{theorem}\label{thm:wallcrossing}
Let $\A$ be an abelian category and let $\wH(\A)$ its completed Hall algebra. 
Then for every chain of torsion classes $\eta$ in $\CT(\A)$ the following equality holds
$$e_\A = \prod^{\leftarrow}_{t\in [0,1]} e_{\P_\eta(t)}$$
where $e_{\P_t}$ is the element associated to $\P_t$ for all $t \in [0,1]$.
\end{theorem}

\begin{proof}
By definition 
$$\prod^{\leftarrow}_{t\in [0,1]} e_{\P_\eta(t)} = \prod^{\leftarrow}_{t\in [0,1]} \left(\sum_{[M]\in\operatorname{Iso}(\P_\eta(t))} [M]\right)$$
where $\P_t$ is the category of $\eta$-quasisemistable objects of phase $t$.
$$\prod^{\leftarrow}_{t\in [0,1]} \left(\sum_{[M]\in\operatorname{Iso}( \P_\eta(t))} [M]\right) = [0] + \sum_{j \in \mathbb{N}} \left(\sum_{\substack{s_1 > \dots > s_j \\ [0]\neq[M_{s_i}]\in\operatorname{Iso}(\P_\eta(s_i))}} [M_{s_1}][M_{s_2}]\dots[M_{s_j}]\right)$$
This is a well-defined element of $\wH(\A)$, so we can write it as
$$[0]+\sum_{k \in \mathbb{N}} \left(\sum_{s_1 > \dots > s_k} [M_{s_1}][M_{s_2}]\dots[M_{s_k}]\right) = [0] + \sum_{[0]\neq[M]\in\operatorname{Iso}(\A)} a_M[M]$$
where $M$ correspond to the isomorphism classes of non-zero objects in $\A$ that can be filtered by objects in $\P_\eta$ with decreasing phase. 
By Theorem \ref{thm:algHNfilt} that correspond to the isomorphism class of every non-zero object $M \in \A$. 
Moreover, this filtration is unique up to isomorphism by Theorem \ref{thm:algHNfilt}.
Thus $a_M = 1$ for every $[M]\in\operatorname{Iso}(\A)$. 
In conclusion, we have that 
$$\prod_{t\in [0,1]}^{\leftarrow} e_{\P_\eta(t)} = [0] + \sum_{M \in \A} [M] = e_\A,$$
as claimed.
\end{proof}

From the previous result and Theorem\;\ref{thm:semistabchain}, we recover the classical wall-crossing formulas.

\begin{corollary}\cite[Theorem 5.1]{Reineke2003}
Let $\A$ be an abelian category and let $\wH(\A)$ its completed Hall algebra. 
Then for every stability function $\phi: Obj^*(\A) \to [0,1]$ we have that
$$e_\A = \prod^{\leftarrow}_{t\in [0,1]} e_{\P_\phi(t)}$$
where $e_{\P_\phi(t)}$ is the element associated to the subcategory 
$$\P_\phi(t) = \{M \in \A: M \text{ is $\phi$-semistable and } \phi(M)=t\} \cup \{0\}$$
for all $t \in [0,1]$. \qed
\end{corollary}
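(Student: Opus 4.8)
The plan is to expand the ordered product on the right-hand side using the multiplication rule $[L]\cdot[N]=\sum_{M} c^{M}_{LN}[M]$ of $\wH(\A)$ and then to compare, for each basis element $[M]$, the coefficients on the two sides. Unwinding the definition of $\prod^{\leftarrow}$ one obtains
$$\prod^{\leftarrow}_{t\in[0,1]} e_{\P_t} \;=\; [0] \;+\; \sum_{0\neq M\in\A} a_M\,[M],$$
where, for a non-zero object $M$, the coefficient $a_M$ equals the number of pairs consisting of a strictly decreasing tuple $t_1>\dots>t_j$ in $[0,1]$ and a chain of subobjects $0=N_0\subsetneq N_1\subsetneq\dots\subsetneq N_j=M$ with $N_k/N_{k-1}\in\P_{t_k}$ for all $k$; this is because $c^{M}_{M_{t_1},\dots,M_{t_j}}$ counts exactly the chains of subobjects of $M$ with prescribed subquotients $M_{t_k}$, and summing over the isomorphism classes $M_{t_k}\in\P_{t_k}$ yields the stated count.

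Before comparing coefficients I would check that the right-hand side is a well-defined element of $\wH(\A)$, i.e.\ that each $a_M$ is finite: since $\A$ is a length category, any strict chain of subobjects of $M$ has at most finitely many terms; since the $\Hom$-sets of $\A$ are finite, there are only finitely many such chains; and by Theorem~\ref{thm:quasistables}.1 each non-zero object lies in exactly one $\P_t$, so a given chain is compatible with at most one admissible tuple $(t_1,\dots,t_j)$. Hence $a_M$ is simply the number of chains of subobjects of $M$ with non-zero subquotients whose phases, read off through $\varphi_\eta$, strictly decrease --- that is, the number of ways a filtration as in Theorem~\ref{thm:algHNfilt} can be realised inside $M$.

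The heart of the proof is the identity $a_M=1$ for every non-zero $M$. Theorem~\ref{thm:algHNfilt} gives existence, so $a_M\geq 1$; for $a_M\leq 1$ one needs uniqueness of the filtration \emph{as a chain of subobjects}, a slight strengthening of the ``unique up to isomorphism'' statement of that theorem. I would prove this by induction on the length of $M$, retracing the construction in the proof of Theorem~\ref{thm:algHNfilt}: for an admissible chain $0=N_0\subsetneq\dots\subsetneq N_j=M$, the condition $N_j/N_{j-1}\in\P_{t_j}$ together with arguments analogous to those in the proof of Theorem~\ref{thm:algHNfilt} forces $t_j=\inf\{r:M\notin\T_r\}$ and identifies $0\to N_{j-1}\to M\to N_j/N_{j-1}\to 0$ with the canonical short exact sequence of $M$ with respect to the torsion pair $\big(\bigcup_{r>t_j}\T_r,\ \bigcap_{s>t_j}\F_s\big)$; since the torsion subobject in a torsion pair is a genuinely unique subobject, $N_{j-1}$ is uniquely determined, and the inductive hypothesis applied to $N_{j-1}$ closes the argument. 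Finally, $[0]$ appears with coefficient $1$ (from the empty product $j=0$) and from no other term, since every term with $j\geq 1$ contributes only non-zero objects. Assembling these facts yields $\prod^{\leftarrow}_{t\in[0,1]} e_{\P_t}=[0]+\sum_{0\neq M\in\A}[M]=e_\A$.

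The step I expect to be the main obstacle is precisely this upgrade of ``unique up to isomorphism'' to uniqueness of the filtration as a chain of subobjects of $M$: the Hall product counts honest subobject chains, so this is what is needed to conclude $a_M=1$ rather than merely ``a filtration exists''; the torsion-pair recursion underlying Theorem~\ref{thm:algHNfilt} is what makes this work. A secondary, more routine issue is justifying the well-definedness of the ordered product over the uncountable index set $[0,1]$, which the finiteness remarks above take care of.
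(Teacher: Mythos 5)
Your argument is correct, and at bottom it is the same computation the paper relies on: the paper disposes of this corollary in zero lines, as the combination of Theorem~\ref{thm:wallcrossing} (applied to the chain of torsion classes $\eta_\phi$ of Proposition~\ref{prop:stab-chain}) with Theorem~\ref{thm:semistabchain} (which identifies the subcategories $\P_t$ of the statement with the quasisemistable subcategories of $\eta_\phi$), whereas you re-derive the content of Theorem~\ref{thm:wallcrossing} inline. Two points of comparison are worth recording. First, your upgrade of ``unique up to isomorphism'' to uniqueness of the Harder--Narasimhan filtration \emph{as a chain of subobjects} is a genuine refinement: the Hall number $c^M_{LN}$ counts honest subobjects of $M$, so the coefficient $a_M$ is the number of admissible subobject chains, and your recursion --- each $N_{j-1}$ is forced to be the torsion part of $M$ with respect to the torsion pair $\bigl(\bigcup_{r>t_j}\T_r,\ \bigcap_{s>t_j}\F_s\bigr)$, which is a unique subobject and not merely unique up to isomorphism --- is exactly what justifies $a_M\leq 1$; the paper's proof of Theorem~\ref{thm:wallcrossing} passes over this point silently. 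Second, the one step you leave implicit is the reduction itself: you work throughout with $\varphi_\eta$, the classes $\T_r$, and Theorems~\ref{thm:algHNfilt} and~\ref{thm:quasistables}, all of which presuppose that $\phi$ has been converted into a chain of torsion classes $\eta_\phi\in\CT(\A)$ (Proposition~\ref{prop:stab-chain} together with Remark~\ref{rmk:etainCT(A)}, using that $\A$ is a length category in this section) and that the semistable subcategories in the statement coincide with the quasisemistable subcategories $\P_t$ of $\eta_\phi$ (Theorem~\ref{thm:semistabchain}); making these two citations explicit is all that is missing for the proof to be complete.
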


From Theorem \ref{thm:wallcrossing} we also recover the so-called \textit{torsion pair identity} of Hall algebras, generalising \cite[Proposition 6.4]{Bridgeland2016}.

\begin{corollary}
Let $\A$ be an abelian category and let $\wH(\A)$ its completed Hall algebra.
Then every torsion pair $(\T, \F)$ in $\A$ induces the following identity in $\wH(\A)$.
$$e_\A = e_\T e_\F$$

\end{corollary}

\begin{proof}
In example \ref{ex:torsionpair} we showed that every torsion pair in $\A$ induces a chain of torsion classes $\eta_{(r_1, r_2)}$, where $r_1, r_2 \in [0,1]$ and $r_1 < r_2$. 
Then, it is enough to take $t \in (r_1, r_2)$ and apply Theorem \ref{thm:wallcrossing} for $\eta_{(r_1, r_2)}$.
\end{proof}

\paragraph{Acknowledgements} 
The author is thankful to the people of the D\'epartement de Math\'ematiques de l'Universit\'e de Sherbrooke, where the main ideas of this project were first written.
He is also thankful for the referee for their valuable work and insightful comments that helped improve the readability of this paper.

\paragraph{Funding} Throughout the course of this work the author was funded by the European Union’s Horizon 2020 research and innovation programme under the Marie Sklodowska-Curie grant agreement No 893654, by the Deutsche Forschungsgemeinschaft (DFG, German Research Foundation) under Germany's Excellence Strategy Programme -- EXC-2047/1 -- 390685813, and by the EPSRC through the Early Career Fellowship, EP/P016294/1.

\def\cprime{$'$} \def\cprime{$'$}

\bigskip
\noindent
\textsf{[Treffinger]} \textsc{Instituto de Investigaciones Matemáticas ''Luis Santaló" \\
UBA-CONICET\\
Pabellón I, Ciudad Universitaria\\
(1428) Buenos Aires, Argentina}\\
{\scriptsize
URL: {\tt https://sites.google.com/view/hipolitotreffinger}\\
e-mail: {\tt htreffinger@dm.uba.ar}
}

\end{document}